\newcommand{\N}{\mathbb{N}}
\renewcommand{\a}{\alpha}
\renewcommand{\b}{\beta}
\renewcommand{\o}{{\omega}}
\renewcommand{\(}{\left\(}
\renewcommand{\)}{\right\)}
\newcommand{\pa}[2]{\left(\frac{#1}{#2}\right)}
\numberwithin{equation}{section}
\theoremstyle{plain}
\newtheorem{theorem}{Theorem}[section]
\newtheorem{lemma}[theorem]{Lemma}
\newtheorem{conjecture}[theorem]{Conjecture}
\newtheorem*{conjecture*}{Conjecture}
\newtheorem{corollary}[theorem]{Corollary}
\theoremstyle{definition}
\numberwithin{equation}{section}
\setlist[enumerate]{leftmargin=*,label=\rm{(\arabic*)}}
\title{On a conjecture of Andrews and Bachraoui}
\author{Koustav Banerjee}
\author{Kathrin Bringmann}
\author{William Keith}
\address{University of Cologne, Department of Mathematics and Computer Science, Weyertal 86-90, 50931 Cologne, Germany}
\email{kbanerj1@uni-koeln.de}
\email{kbringma@uni-koeln.de}
\address{Department of Mathematical Sciences, Michigan Tech, Houghton, MI 49931-1295}
\email{wjkeith@mtu.edu}
\subjclass[2020]{05A17, 11P81.}
\keywords{mock theta functions, partitions, $q$-series.}
\begin{document}
	
	\begin{abstract}
Recently, Andrews and Bachraoui considered a generating function $F_{k,m}(q)$ associated with certain two-color partitions, and conjectured that this function has non-negative coefficients for $m=1$. They showed this property for $1 \leq k \leq 4$.  In this note, we prove that $F_{k,1}(q)$ has non-negative coefficients for $5 \leq k \leq 10$. Moreover, we show that, as $k\to\infty$, $F_{k,1}(q)$ is related to Ramanujan's third order mock theta function $\o(q)$ and to quotients of certain $q$-binomial coefficients.
	\end{abstract}
	
	\maketitle

% Recently, Andrews and Bachraoui considered the difference of number of even parts based on its parity in certain restricted two-colored partitions, {\bf KB: Maybe we cannot have the precise description here? And maybe only speak about $m=1$ here?} where smallest odd part is in blue color with multiplicity $m$ ($m\in \mathbb{N}$), even parts of same colors are distinct, and the difference of smallest even blue part (if appear) and smallest odd blue part is at least $2k-1$. They conjectured that the associated generating function $F_{k,m}(q)$ has non-negative coefficients for $m=1$. In this note, we prove that $F_{k,1}(q)$ has non-negative coefficients for $k\in \{5,6,7\}$. Moreover, we show that $F_{k,1}(q)$ is related to Ramanujan's third order mock theta function $\o(q)$.

%{\bf \color{blue} K: all unused equation numbers have been deleted and all five references are used.}

\section{Introduction and Statement of Results}
 Throughout let $(a)_n=(a;q)_n:=\prod_{j=0}^{n-1} (1-aq^j)$ for $n\in\N_0\cup\{\infty\}$. Moreover let $(a_1,...,a_\ell)_n=(a_1,\dots,a_\ell;q)_n:=(a_1)_n\cdots(a_\ell)_n$ for $\ell\in\N$. 
Andrews and Bachraoui \cite[Conjecture 2.8 and Conjecture 2.9]{AB} conjectured that for $k,m\in \mathbb{N}$ and for $k=2, m=4$, the coefficients of the $q$-series 
\begin{equation*}
\sum_{n\ge 0} \frac{\left(q^{2n+2},q^{2n+2k};q^2\right)_\infty}{\left(q^{2n+1};q^2\right)^2_\infty} q^{m(2n+1)}=:\sum_{n\ge 0}c_{k,m}(n)q^n=:F_{k,m}(q)
\end{equation*}
are positive. Note that for $m=1$, %They especially conjectured this for $m=1$ (and every $k \in \mathbb{N}$) and $(k,m)=(2,4)$. 
we have 
\begin{equation}\label{def}
F_{k,1}(q)=\sum_{n\ge 0}\frac{\left(q^{2n+2},q^{2n+2k};q^2\right)_\infty}{\left(q^{2n+1};q^2\right)^2_\infty} q^{2n+1}.
\end{equation}

Andrews and Bachraoui \cite[(2.2), (2.3), (2.5), Theorem 2.6]{AB} proved that $c_{k,1}(n)>0$ for $n\in \mathbb{N}_0$ and $1\le k\le 4$. Moreover they proposed the following conjecture (see \cite[Conjecture 2.8]{AB}), which we partially prove in this paper. %it is the purpose of these notes to consider.

\begin{conjecture}\label{ABconj}
	For $k\in \mathbb{N}$ and $n\in \mathbb{N}_0$, $c_{k,1}(n)>0$.	
\end{conjecture}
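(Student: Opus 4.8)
The plan is to localize all sign cancellation and then establish positivity separately for coefficients of small and of large degree. First I would combine the factorization $(q^{2n+2k};q^2)_\infty=(q^{2n+2};q^2)_\infty/(q^{2n+2};q^2)_{k-1}$ with the classical identity of Gauss $(q^2;q^2)_\infty/(q;q^2)_\infty=\sum_{j\ge0}q^{j(j+1)/2}$ to rewrite \eqref{def} as
\begin{equation*}
F_{k,1}(q)=\left(\sum_{j\ge0}q^{j(j+1)/2}\right)^{2}\sum_{n\ge0}q^{2n+1}\frac{(q;q^2)_n^2}{(q^2;q^2)_n^2\,(q^{2n+2};q^2)_{k-1}}.
\end{equation*}
The outer factor is the square of a series with non-negative coefficients, and each $1/(q^{2n+2};q^2)_{k-1}$ is non-negative, so the only source of negativity is the alternating factor $(q;q^2)_n^2$. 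A direct check (e.g. the $n=1$ summand has the factor $1/(1+q)^2=1-2q+3q^2-\cdots$) shows the individual summands already carry negative coefficients; hence no term-by-term argument can succeed, and the proof must exploit cancellation across different $n$. This is the structural reason the full conjecture is delicate.

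For coefficients of bounded degree I would use stabilization in $k$. Since $(q^{2n+2k};q^2)_\infty=1+O(q^{2n+2k})$, the $n$-th summand of \eqref{def} differs from its $k\to\infty$ limit only by $O(q^{4n+2k+1})$, so the first possible discrepancy occurs in degree $2k+1$ and therefore $c_{k,1}(N)=c_{\infty,1}(N)$ for all $N\le 2k$, where $F_{\infty,1}(q)=\sum_{n\ge0}q^{2n+1}(q^{2n+2};q^2)_\infty/(q^{2n+1};q^2)_\infty^2$. It then suffices to prove $c_{\infty,1}(N)\ge0$ for every $N$. Here I would invoke the relation of $F_{\infty,1}$ to Ramanujan's mock theta function $\o(q)$ and to quotients of $q$-binomial coefficients established in the second result of this note, and run a Hardy--Ramanujan--Rademacher circle-method analysis of the underlying mock-modular object: the dominant cusp supplies a positive main term dominating the error for all large $N$, with the finitely many small $N$ checked by hand. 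This disposes of the entire region $N\le 2k$ for every $k$ at once.

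The genuinely new work is the complementary region $N>2k$, i.e.\ large degree for fixed $k$. Here I would run a saddle-point/circle-method analysis directly on $F_{k,1}(q)$, treating $k$ as a parameter: the finite product $(q^{2n+2};q^2)_{k-1}^{-1}$ (equivalently the numerator $(q^{2n+2k};q^2)_\infty$) perturbs the behaviour of each summand near $q=1$, and one must extract the leading asymptotics of $c_{k,1}(N)$ as $N\to\infty$, prove the main term positive, and bound the remainder. The target is an effective threshold $N_0(k)$ beyond which positivity is automatic; if one can arrange $N_0(k)\le 2k$, then the two regimes overlap and together cover all pairs $(k,N)$, while any residual finite window $2k<N<N_0(k)$ is closed by explicit estimates built on the reformulation above.

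The main obstacle is making this large-degree analysis uniform and effective in $k$: the parameter enters both the exponential main term and the error term, and forcing $N_0(k)\le 2k$ demands error bounds explicit in $k$ and $N$ simultaneously, precisely the uniformity that asymptotic methods tend to surrender. A cleaner, if more speculative, route would avoid asymptotics altogether. The squared quotients $(q;q^2)_n^2$ appearing in the reformulation strongly suggest a Bailey-pair or indefinite (Hecke-type) theta representation of $F_{k,1}(q)$ from which non-negativity could be read off directly, or else an explicit sign-reversing involution on the signed objects enumerated by the reformulated sum, leaving a manifestly non-negative fixed-point set. Either device would furnish a uniform combinatorial proof valid for all $k$; constructing it in the presence of the parameter $k$ and the mock-modular obstruction is the crux of the problem.
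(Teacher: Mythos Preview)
The statement you are addressing is not proved in the paper: it is recorded there as an open conjecture, and the paper establishes it only for $5\le k\le 10$ (with $1\le k\le 4$ due to Andrews--Bachraoui). So there is no ``paper's proof'' to match; what you have written is a research outline rather than a proof, and it has genuine gaps that you yourself flag.

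Two remarks on the parts that do work. Your stabilization claim $c_{k,1}(N)=c_{\infty,1}(N)$ for $N\le 2k$ is exactly the content of the paper's \Cref{lem3} and \Cref{cor1}, so you are on firm ground there. However, invoking a Hardy--Ramanujan--Rademacher analysis to get $c_{\infty,1}(N)\ge 0$ is unnecessary: by \Cref{lem2} the limit is $q\omega(q)$, and from the very definition \eqref{Ramanujanomega} every coefficient of $\omega(q)$ is manifestly nonnegative (indeed positive). So the region $N\le 2k$ is already settled with no analytic input.

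The substantive gap is the region $N>2k$. Your plan there is to run a circle-method/saddle-point argument on $F_{k,1}(q)$ with $k$ as a parameter and extract an effective threshold $N_0(k)$, hoping to force $N_0(k)\le 2k$. You correctly identify the obstacle: the error terms in such an analysis typically depend on $k$ in a way that makes $N_0(k)$ grow much faster than linearly, so arranging $N_0(k)\le 2k$ is not a matter of bookkeeping but the entire difficulty. No mechanism is proposed for achieving this uniformity, and the fallback suggestions (a Bailey-pair/Hecke-type representation, or a sign-reversing involution) are speculative wish-list items rather than concrete constructions. In short, the proposal reduces the conjecture to a hard effective-asymptotics problem without indicating how to solve it; the paper, for its part, does not attempt this route either and instead proves individual cases $k\le 10$ by explicit $q$-series manipulations and computation, leaving the general $k$ open.
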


 To state our theorems, for $A(q)=\sum_{n\ge 0}a(n)q^n$ we mean, by $A(q)\succeq 0$ (resp. $A(q)\succ 0$) that $a(n)\ge 0$ (resp. $a(n)> 0$) for $n\in \mathbb{N}_0$.

%{\kbf TD: Merge Theorem \ref{thm1} and Theorem \ref{thm2} if we agree. \color{blue} K: Do we need to merge? I am asking because these are two different approaches. One based on partial fraction decomposition and other on is based on convolution.}

\begin{theorem}\label{thm1}
	For $k\in \{5,6,7\}$,  we have $F_{k,1}(q)\succeq 0$.
\end{theorem}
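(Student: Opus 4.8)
The plan is to extend the approach of Andrews and Bachraoui: for each of $k=5,6,7$ one rewrites $F_{k,1}(q)$ in a form from which non-negativity follows after a bounded coefficient check. First I would replace the infinite products in \eqref{def} by ratios of $q$-Pochhammer symbols and invoke Gauss's identity $\psi(q):=\frac{(q^2;q^2)_\infty}{(q;q^2)_\infty}=\sum_{j\ge0}q^{j(j+1)/2}$ to write $F_{k,1}(q)=\psi(q)^2\sum_{n\ge0}\frac{q^{2n+1}(q;q^2)_n^2}{(q^2;q^2)_n^2(q^{2n+2};q^2)_{k-1}}$. Expanding $1/(q^{2n+2};q^2)_{k-1}$ as a geometric multiple series, interchanging summations, evaluating the resulting inner sums over $n$ (each a specialisation of ${}_2\phi_1(q,q;q^2;q^2,x)$ at $x=q^{2\nu}$) via Heine's transformation and the $q$-binomial theorem, and recombining, I would reach an identity of the shape
\[
F_{k,1}(q)=q\sum_{m\ge0}q^{2m}\binom{m+k-2}{k-2}_{q^2}(q^2;q^2)_m\,H_m(q),\qquad H_m(q):=\sum_{n\ge0}\binom{n+m}{m}_{q^2}\frac{q^n}{(q^{2n+1};q^2)_{m+1}},
\]
which recovers, for instance, the formula $F_{1,1}(q)=\sum_{n\ge0}q^{n+1}/(1-q^{2n+1})$ used in \cite{AB}.

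In this identity the Gaussian binomial $\binom{m+k-2}{k-2}_{q^2}$ and the series $H_m(q)$ manifestly lie in $\succeq0$ (the latter because each $\binom{n+m}{m}_{q^2}$ and each $q^n/(q^{2n+1};q^2)_{m+1}$ does), so the sole obstruction to $F_{k,1}(q)\succeq0$ is the factor $(q^2;q^2)_m$. I would then expand $(q^2;q^2)_m=\sum_{j\ge0}(-1)^jq^{j(j+1)}\binom mj_{q^2}$, interchange, and pair the $j=2\ell$ with the $j=2\ell+1$ terms, which reduces the theorem to the family of coefficient inequalities (one for each $\ell\ge0$)
\[
\sum_{m\ge2\ell}q^{2m}\binom{m+k-2}{k-2}_{q^2}\binom m{2\ell}_{q^2}H_m(q)\ \succeq\ q^{4\ell+2}\sum_{m\ge2\ell+1}q^{2m}\binom{m+k-2}{k-2}_{q^2}\binom m{2\ell+1}_{q^2}H_m(q),
\]
an inequality between two manifestly non-negative series.

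For fixed $\ell$, the factor $q^{4\ell+2}$ shifts the right-hand side into high degrees, whereas the left already contains the $m=2\ell$ summand $q^{4\ell}\binom{2\ell+k-2}{k-2}_{q^2}H_{2\ell}(q)$, whose coefficients I would bound from below explicitly; matching that lower bound against a crude upper bound for the right-hand side confines any possible deficit to coefficients of degree below an explicit $D(k,\ell)$, and these finitely many coefficients are then verified non-negative by direct computation. This last balancing is the main obstacle: as $k$ increases the weight $q^{2m}\binom{m+k-2}{k-2}_{q^2}$ grows heavier and the deficit window widens, so the argument closes only through $k=7$. That is precisely why the cases $k\in\{8,9,10\}$ — and, further out, the regime $k\to\infty$ with its connection to $\omega(q)$ and to quotients of $q$-binomial coefficients — are treated by the separate, more structural arguments developed in the remainder of the paper.
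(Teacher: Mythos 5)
Your route differs from the paper's proof, which uses Lemma \ref{newlem1} to write $F_{k,1}(q)$ as $\frac{q}{(1-q)(q;q^2)_{k-1}}$ times a \emph{finite} sum of simple rational functions and then, for each of $k=5,6,7$ separately, carries out explicit long divisions and termwise regroupings into manifestly non-negative pieces. As written, however, your argument has genuine gaps and does not constitute a proof. First, the central identity $F_{k,1}(q)=q\sum_{m\ge0}q^{2m}\left[\begin{smallmatrix} m+k-2 \\ k-2 \end{smallmatrix}\right]_{q^2}(q^2;q^2)_m H_m(q)$ is only asserted to be ``of the shape'' obtained after an unspecified chain of expansions, interchanges, and Heine/$q$-binomial evaluations; it is never derived (and at $k=1$ it already requires a special convention for the Gaussian binomial), so the foundation of everything that follows is unproved, even though low-order checks suggest it may well be true.

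Second, and more seriously, the positivity argument does not close. Expanding $(q^2;q^2)_m$ by \eqref{Gaussian} and pairing the $j=2\ell$ term with the $j=2\ell+1$ term yields only a \emph{sufficient} family of inequalities, each strictly stronger than the non-negativity of the full alternating sum, and you offer no evidence, computational or theoretical, that these stronger inequalities hold for $k=5,6,7$. The proposed mechanism for each pair --- a lower bound from the $m=2\ell$ summand on the left against a ``crude upper bound'' on the right, confining any deficit to degrees below an explicit $D(k,\ell)$ --- is never carried out: no bounds are stated, and eventual domination is not automatic, since the coefficients on both sides grow at comparable rates while the right-hand side carries the extra factor $\left[\begin{smallmatrix} m \\ 2\ell+1 \end{smallmatrix}\right]_{q^2}$, so a fixed shift by $q^{4\ell+2}$ need not force coefficientwise domination in large degrees. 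Likewise the finitely many low-degree coefficient checks are announced but not performed, and the claim that this method ``closes only through $k=7$'' is speculation rather than a demonstrated obstruction. In short, the plan is interesting and genuinely different from the paper's elementary case analysis, but the two load-bearing steps (the identity and the pairwise domination) are missing, so the theorem is not established even for $k=5$.
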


Using a computational approach we extend this to $k \leq 10$.

\begin{theorem}\label{thm2} For $k \in \{8,9,10\}$, we have $F_{k,1}(q) \succeq 0$. \end{theorem}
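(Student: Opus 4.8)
To prove Theorem~\ref{thm2}, the plan is to reduce the infinite positivity statement to a finite computation together with a tail bound, separately for each $k\in\{8,9,10\}$. Write $a_n(q):=\frac{(q^{2n+2},q^{2n+2k};q^2)_\infty}{(q^{2n+1};q^2)_\infty^2}$, so that $F_{k,1}(q)=\sum_{n\ge 0}a_n(q)\,q^{2n+1}$. Each factor appearing in $a_n(q)$ has only powers $q^{\ge 2n+1}$ in its non-constant part, hence $a_n(q)-1$ is divisible by $q^{2n+1}$ and $\bigl(a_n(q)-1\bigr)q^{2n+1}$ is divisible by $q^{4n+2}$. Consequently
\[
F_{k,1}(q)=\frac{q}{1-q^2}+E_k(q),\qquad E_k(q):=\sum_{n\ge 0}\bigl(a_n(q)-1\bigr)q^{2n+1},
\]
and for every $d$ the coefficient $[q^d]E_k(q)$ is a \emph{finite} sum over $n\le(d-2)/4$. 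Since $[q^d]\frac{q}{1-q^2}=\mathbf 1_{d\text{ odd}}$, we have $c_{k,1}(d)\ge 0$ for all $d$ if and only if $[q^d]E_k(q)\ge-\mathbf 1_{d\text{ odd}}$ for all $d$.

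Next I would split off a manifestly nonnegative part of $E_k$. From the identity $a_n(q)=\frac{1}{(q^{2n+1};q^2)_\infty^2}+\frac{(q^{2n+2},q^{2n+2k};q^2)_\infty-1}{(q^{2n+1};q^2)_\infty^2}$ one obtains $E_k(q)=N_k(q)+\mathcal E_k(q)$, where
\[
N_k(q):=\sum_{n\ge 0}\left(\frac{1}{(q^{2n+1};q^2)_\infty^2}-1\right)q^{2n+1}\succeq 0,\qquad \mathcal E_k(q):=\sum_{n\ge 0}\frac{(q^{2n+2},q^{2n+2k};q^2)_\infty-1}{(q^{2n+1};q^2)_\infty^2}\,q^{2n+1}.
\]
Here $\mathcal E_k$ collects all possible negativity; moreover $(q^{2n+2},q^{2n+2k};q^2)_\infty-1$ is divisible by $q^{2n+2}$, so the $n$-th term of $\mathcal E_k$ is divisible by $q^{4n+3}$ and $[q^d]\mathcal E_k(q)$ is again a finite sum, over those $n$ with $4n+3\le d$. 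The remaining task is therefore to prove
\[
[q^d]\mathcal E_k(q)\ \ge\ -\bigl(\mathbf 1_{d\text{ odd}}+[q^d]N_k(q)\bigr)\qquad\text{for all }d.
\]
For $d$ below an explicit bound $D_k$ this is a finite inequality checked by computer; for $d>D_k$ one needs an effective lower bound on the coefficients of $\mathcal E_k$, obtained by controlling the coefficients of the finitely many infinite products $\frac{(q^{2n+2},q^{2n+2k};q^2)_\infty-1}{(q^{2n+1};q^2)_\infty^2}$ relevant in degree $d$ and showing their (few, high-degree) negative contributions are outweighed.

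Two auxiliary observations should help keep this in check. First, there is the easily verified telescoping relation
\[
F_{k+1,1}(q)-F_{k,1}(q)=\sum_{n\ge 0}q^{4n+2k+1}\,\frac{(q^{2n+2},q^{2n+2k+2};q^2)_\infty}{(q^{2n+1};q^2)_\infty^2},
\]
whose right-hand side is built from the same terms as $F_{k+1,1}(q)$ but vanishes to order $q^{2k+1}$; starting from the cases $k\le 4$ of Andrews and Bachraoui \cite{AB}, or from Theorem~\ref{thm1}, one may replace $F_{k,1}(q)\succeq 0$ by nonnegativity of these increasingly high-order-vanishing corrections, which shortens the finite verification. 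Second, it is plausible that even $E_k(q)\succeq 0$ (equivalently $c_{k,1}(d)\ge\mathbf 1_{d\text{ odd}}$), which would make the $-\mathbf 1$ slack unnecessary; either way the core of the argument is the same.

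The step I expect to be the main obstacle is the effective tail bound — certifying, without examining infinitely many degrees, that the high-degree negative contributions of $\mathcal E_k$ never overwhelm $\mathbf 1_{d\text{ odd}}+[q^d]N_k(q)$; concretely, pinning down $D_k$ together with an explicit, finitely described nonnegative lower bound for $\mathcal E_k$ valid for all $d>D_k$. Everything else — expanding the relevant truncations to the precision needed for $k\in\{8,9,10\}$, which exceeds what is required for the $k$ treated in Theorem~\ref{thm1} — is a finite, if substantial, computation.
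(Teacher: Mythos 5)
Your plan does not close: it reduces each individual coefficient $[q^d]F_{k,1}(q)$ to a finite computation, but it does not reduce the \emph{theorem} to a finite computation, and the piece you yourself flag as the main obstacle --- an effective bound $D_k$ together with a lower bound on $[q^d]\mathcal E_k(q)$ valid for all $d>D_k$ --- is exactly the content of the statement and is nowhere supplied. Saying that the negative contributions of $\frac{(q^{2n+2},q^{2n+2k};q^2)_\infty-1}{(q^{2n+1};q^2)_\infty^2}$ are ``few and high-degree'' is unsubstantiated: these are infinite products with sign changes in every degree range, the number of terms $n\le (d-3)/4$ contributing to degree $d$ grows with $d$, and controlling their combined sign in all large degrees is essentially the original problem restated. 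The telescoping identity for $F_{k+1,1}-F_{k,1}$ is correct but only shifts the same difficulty to a series of the same shape, and the splitting $E_k=N_k+\mathcal E_k$ gives no leverage on it. As written, the proposal is a strategy with an acknowledged hole at its center, not a proof.

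The missing idea, which is how the paper proceeds, is to first transform $F_{k,1}(q)$ into a \emph{finite} expression. Heine's transformation (Lemma \ref{HeineTransform}) yields Lemma \ref{newlem1}: for $k\ge 3$,
\[
F_{k,1}(q)=\frac{q}{(1-q)(q;q^2)_{k-1}}\left(1+(1-q)\sum_{n=1}^{k-2}\frac{\left(q^{4-2k};q^2\right)_n}{\left(q^2;q^2\right)_n}\,\frac{q^{(2k-1)n}}{1-q^{2n+1}}\right),
\]
the sum terminating because $(q^{4-2k};q^2)_n=0$ for $n\ge k-1$. Rewriting the summands with Gaussian binomials and clearing the denominators $1-q^{2n+1}$ over $1-q^{\ell}$ with $\ell=\operatorname{lcm}(1,3,\dots,2k-3)$, the paper reduces $F_{k,1}(q)\succeq 0$ to the nonnegativity of all partial sums of the coefficients of one explicit polynomial $H_k(q)$ (since the prefactor $\frac{1}{(q;q^2)_{k-1}(1-q^\ell)}$ is manifestly $\succeq 0$ and dividing a polynomial by $1-q$ only requires checking partial sums up to its degree). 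That is a genuinely finite check, carried out by computer for $k\in\{8,9,10\}$, with no tail estimate needed. If you want to salvage your approach, you would have to either prove such a terminating form yourself or produce the effective asymptotic lower bound you postpone; without one of these, the argument is incomplete.
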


Our next result relates $F_{k,1}$ to the mock theta function $\omega$ of Ramanujan \cite{AB5}
\begin{equation}\label{Ramanujanomega}
\omega(q):=\sum_{n\ge0}\frac{q^{2n(n+1)}}{\left(q;q^2\right)^2_{n+1}}.
\end{equation}
 \begin{theorem}\label{lem2}
	We have
	\[
	\lim_{k\to\infty} F_{k,1}(q)= q\omega(q).
	\]
\end{theorem}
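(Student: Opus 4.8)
The plan is to take the limit $k\to\infty$ directly inside the sum defining $F_{k,1}(q)$ in \eqref{def}, term by term in the formal power series ring $\mathbb{Z}[[q]]$. For each fixed $n\ge 0$, the factor $\left(q^{2n+2k};q^2\right)_\infty = \prod_{j\ge 0}\left(1-q^{2n+2k+2j}\right)$ tends to $1$ as $k\to\infty$: every term of this product other than the empty product involves a power of $q$ that is at least $2n+2k$, which exceeds any fixed exponent once $k$ is large, so coefficient-wise the product stabilizes to $1$. Hence, at least formally,
\[
\lim_{k\to\infty}F_{k,1}(q)=\sum_{n\ge 0}\frac{\left(q^{2n+2};q^2\right)_\infty}{\left(q^{2n+1};q^2\right)_\infty^2}\,q^{2n+1}.
\]
The first step is to justify this interchange rigorously. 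Since the $n$-th summand of $F_{k,1}(q)$ has lowest-degree term $q^{2n+1}$, only the summands with $2n+1\le N$ contribute to the coefficient of $q^N$, so for each $N$ the coefficient of $q^N$ in $F_{k,1}(q)$ is a finite sum; and for each of those finitely many $n$, the $n$-th summand's coefficient of $q^N$ is eventually constant in $k$ (namely, equal to that of the $n$-th summand of the displayed series once $k>N$). This gives coefficient-wise convergence, which is exactly the meaning of the limit in Theorem~\ref{lem2}.

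The second step is to identify the resulting series with $q\omega(q)$. Factor $\left(q^{2n+1};q^2\right)_\infty = \left(q;q^2\right)_\infty / \left(q;q^2\right)_n$ and similarly $\left(q^{2n+2};q^2\right)_\infty = \left(q^2;q^2\right)_\infty/\left(q^2;q^2\right)_n$, so that
\[
\sum_{n\ge 0}\frac{\left(q^{2n+2};q^2\right)_\infty}{\left(q^{2n+1};q^2\right)_\infty^2}\,q^{2n+1}
=\frac{\left(q^2;q^2\right)_\infty}{\left(q;q^2\right)_\infty^2}\sum_{n\ge 0}\frac{\left(q;q^2\right)_n^2}{\left(q^2;q^2\right)_n}\,q^{2n+1}.
\]
So it remains to prove the $q$-series identity
\[
\frac{\left(q^2;q^2\right)_\infty}{\left(q;q^2\right)_\infty^2}\sum_{n\ge 0}\frac{\left(q;q^2\right)_n^2}{\left(q^2;q^2\right)_n}\,q^{2n}=\omega(q),
\]
with $\omega$ as in \eqref{Ramanujanomega}. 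I expect this to follow from a standard basic hypergeometric transformation: the left-hand sum is a ${}_2\phi_1$-type series in base $q^2$ with numerator parameters $q,q$ and denominator parameter $q^2$ (times the shift), and applying the appropriate Heine-type or Watson–Whipple transformation (or a limiting case of the $q$-Gauss / ${}_2\phi_1$ summation) should convert $\frac{\left(q^2;q^2\right)_\infty}{\left(q;q^2\right)_\infty^2}\,{}_2\phi_1\!\left[\begin{smallmatrix}q,q\\q^2\end{smallmatrix};q^2,q^2\right]$ into $\sum_{n\ge 0}\frac{q^{2n(n+1)}}{\left(q;q^2\right)_{n+1}^2}$. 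Alternatively, one can cite a known Hecke-type or Appell–Lerch representation of $\omega(q)$ — for instance, identities in the literature expressing $\omega(q)$ as $\sum_{n\ge 0}\frac{(-1)^n q^{\,?}}{(q;q^2)_\infty}(\cdots)$ — and match coefficients; both routes reduce the theorem to a classical identity.

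The main obstacle is the second step: establishing the $q$-series identity connecting $\sum_{n\ge 0}\frac{\left(q;q^2\right)_n^2}{\left(q^2;q^2\right)_n}q^{2n}$ (weighted by the infinite-product prefactor) to the double-pole-denominator form of $\omega$. The interchange of limits in the first step is essentially bookkeeping because of the built-in $q^{2n+1}$ valuation. If no off-the-shelf transformation applies cleanly, the fallback is to prove both sides satisfy the same $q$-difference equation in $q$ together with enough initial coefficients, or to expand $\frac{\left(q^2;q^2\right)_\infty}{\left(q;q^2\right)_\infty^2}=\sum_{j\ge 0}\frac{q^j}{\left(q;q^2\right)_{?}}$-type series and rearrange — but I anticipate that citing a known representation of $\omega(q)$ (the paper already references \cite{AB5}) and a single Heine transformation will suffice.
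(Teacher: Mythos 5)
Your first step is fine: the coefficient-wise interpretation of the limit and the term-by-term interchange in \eqref{def} are correct (the $q^{2n+1}$ in front of each summand makes each coefficient a finite sum that stabilizes in $k$), and it correctly yields
\[
\lim_{k\to\infty}F_{k,1}(q)=\frac{q\left(q^2;q^2\right)_\infty}{\left(q;q^2\right)_\infty^2}\sum_{n\ge 0}\frac{\left(q;q^2\right)_n^2q^{2n}}{\left(q^2;q^2\right)_n}.
\]
The gap is your second step. The identity
\[
\frac{\left(q^2;q^2\right)_\infty}{\left(q;q^2\right)_\infty^2}\sum_{n\ge 0}\frac{\left(q;q^2\right)_n^2q^{2n}}{\left(q^2;q^2\right)_n}=\omega(q)
\]
is true (it is equivalent to the theorem), but you do not prove it; you only assert that it ``should'' follow from some standard transformation (Heine, Watson--Whipple, a $q$-Gauss limit) or from an unspecified known representation of $\omega(q)$. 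Since the limit interchange is routine, this identity is essentially the entire content of the theorem, so leaving it at that is a genuine gap, not a reduction. Moreover, the tools you name do not apply cleanly: your sum has no second denominator parameter, so Lemma \ref{HeineTransform} is only available in the degenerate limit $c\to 0$, which converts the sum into $\frac{\left(q^3;q^2\right)_\infty}{\left(q^2;q^2\right)_\infty}\sum_{n\ge 0}\frac{\left(q;q^2\right)_n(-1)^nq^{n^2+2n}}{\left(q^2,q^3;q^2\right)_n}$, which is not recognizably $\omega(q)$, and neither $q$-Gauss nor Watson--Whipple matches the parameters here. The shape of \eqref{Ramanujanomega} --- squared Pochhammer $\left(q;q^2\right)_{n+1}^2$ downstairs, quadratic exponent upstairs, no $\left(q^2;q^2\right)_n$ --- is the signature of the Rogers--Fine identity, not of a single Heine transformation.

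For comparison, the paper never takes the limit in \eqref{def} directly: it first transforms $F_{k,1}(q)$ via Andrews's transformation (Lemma \ref{ThmA0}) into $\sum_{n\ge 0}\frac{\left(q^{2k-1};q^2\right)_nq^{n+1}}{\left(q;q^2\right)_{n+1}}$ (Lemma \ref{lem1}); the limit $k\to\infty$ is then the same easy bookkeeping as yours, since $\left(q^{2k-1};q^2\right)_n\to 1$, and the resulting series $q\sum_{n\ge 0}\frac{q^n}{\left(q;q^2\right)_{n+1}}$ is identified with $q\omega(q)$ by one application of Rogers--Fine (Lemma \ref{RF} with $q\mapsto q^2$, $\alpha=0$, $\beta=q^3$, $w=q$). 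To repair your route you would have to actually supply the missing identity, e.g.\ by running those same two transformations on your limit series or by citing a precise reference for it; as written, the central step of the proof is missing.
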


In the following theorem, we obtain more information about the function that prevents $F_{k,1}(q)$ from agreeing with $q\o(q)$.  To state our result, for $A(q)=\sum_{n\ge 0}a(n)q^n$ and $m\in \mathbb{N}_0$, define
\[
\operatorname{coeff}_{\left[q^m\right]}A(q):=a(m).
\]
\begin{theorem}\label{lem3}
	For $k\in\mathbb{N}$, we have
	\[
	q\omega(q)-F_{k,1}(q)=q^{2k+1}E_k(q)\ \text{with}\ E_k(q)\in \mathbb{Z}[[q]]\ \text{and}\ \operatorname{coeff}_{\left[q^0\right]}E_k(q)=1.
	\]	
\end{theorem}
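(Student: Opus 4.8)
The plan is to realize $q\omega(q)-F_{k,1}(q)$ as a single $q$-series whose order of vanishing and leading coefficient can be read off directly. First I would record a closed form for the limit appearing in Theorem~\ref{lem2}: letting $k\to\infty$ coefficientwise in \eqref{def} (for each fixed $n$ one has $(q^{2n+2k};q^2)_\infty\to 1$, and the $n$-th summand has $q$-order $\ge 2n+1$, so the limit may be taken inside the sum) yields, together with Theorem~\ref{lem2}, the identity
\[
q\omega(q)=\sum_{n\ge 0}\frac{\left(q^{2n+2};q^2\right)_\infty}{\left(q^{2n+1};q^2\right)^2_\infty}\,q^{2n+1}.
\]
Subtracting \eqref{def} from this termwise---legitimate in $\mathbb{Z}[[q]]$ since in both series the $n$-th summand has $q$-order $\ge 2n+1\to\infty$---gives
\[
q\omega(q)-F_{k,1}(q)=\sum_{n\ge 0}\frac{\left(q^{2n+2};q^2\right)_\infty}{\left(q^{2n+1};q^2\right)^2_\infty}\,q^{2n+1}\left(1-\left(q^{2n+2k};q^2\right)_\infty\right).
\]

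Next I would estimate orders summand by summand. Expanding $\frac{1}{(q^{2n+1};q^2)^2_\infty}=\prod_{j\ge0}(1-q^{2n+1+2j})^{-2}$ as a product of geometric series shows $\frac{(q^{2n+2};q^2)_\infty}{(q^{2n+1};q^2)^2_\infty}\in\mathbb{Z}[[q]]$ with constant term $1$, so this factor times $q^{2n+1}$ has $q$-order exactly $2n+1$ with leading coefficient $1$. The Euler expansion $(q^{2n+2k};q^2)_\infty=1-q^{2n+2k}-q^{2n+2k+2}+\cdots$ shows $1-(q^{2n+2k};q^2)_\infty$ has $q$-order exactly $2n+2k$ with leading coefficient $+1$. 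Hence the $n$-th summand lies in $\mathbb{Z}[[q]]$ and has $q$-order exactly $4n+2k+1$ with leading coefficient $+1$. In particular the whole series is divisible by $q^{2k+1}$, and only $n=0$ contributes to the coefficient of $q^{2k+1}$, which therefore equals $1$.

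Finally I would set $E_k(q):=q^{-(2k+1)}\bigl(q\omega(q)-F_{k,1}(q)\bigr)$. Since $q\omega(q),F_{k,1}(q)\in\mathbb{Z}[[q]]$ and their difference is divisible by $q^{2k+1}$, we get $E_k(q)\in\mathbb{Z}[[q]]$, and the order computation gives $\operatorname{coeff}_{[q^0]}E_k(q)=1$, which is the assertion. I do not expect a genuine obstacle here: the only points requiring care are verifying that the termwise manipulations are valid in the ring of formal power series (immediate from the order bounds above) and that the single relevant leading coefficient is $+1$ rather than $-1$ (immediate from the product expansions). If one prefers not to invoke Theorem~\ref{lem2} for the closed form, the identical argument runs with $q\omega(q)$ replaced by $G(q):=\sum_{n\ge0}\frac{(q^{2n+2};q^2)_\infty}{(q^{2n+1};q^2)^2_\infty}q^{2n+1}$ throughout, and Theorem~\ref{lem2} is used only at the end to identify $G(q)=q\omega(q)$.
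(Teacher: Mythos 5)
Your argument is correct, but it follows a genuinely different route from the paper. The paper first rewrites $F_{k,1}(q)$ as the single sum of Lemma \ref{lem1} (via the transformation in Lemma \ref{ThmA0}), subtracts it from the representation \eqref{Omegareps} of $q\omega(q)$, expands $(q^{2k-1};q^2)_n$ with the $q$-binomial theorem \eqref{Gaussian}, and reindexes to pull out $q^{2k+1}$, thereby producing an \emph{explicit} formula for $E_k(q)$ from which integrality and the constant term $1$ are read off. You instead stay with the original product representation \eqref{def}: combining the coefficientwise limit of \eqref{def} with Theorem \ref{lem2} gives the closed form $q\omega(q)=\sum_{n\ge0}\frac{(q^{2n+2};q^2)_\infty}{(q^{2n+1};q^2)^2_\infty}q^{2n+1}$, and then a termwise subtraction plus an order-of-vanishing count (the $n$-th summand has exact order $4n+2k+1$ with leading coefficient $+1$) yields divisibility by $q^{2k+1}$ and the leading coefficient. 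All the formal-power-series justifications you flag (stabilization of coefficients as $k\to\infty$, formal convergence of the termwise difference, sign of the leading term of $1-(q^{2n+2k};q^2)_\infty$) do go through, and there is no circularity since Theorem \ref{lem2} is proved independently of Theorem \ref{lem3}. What you gain is a shorter, more elementary derivation that avoids the Gaussian-binomial manipulation entirely; what the paper's computation buys is a closed-form expression for $E_k(q)$ (a double sum over $\frac{1}{(1-q)(1-q^3)}$), which is potentially useful beyond the bare statement, e.g.\ for studying the coefficients of $E_k$ itself. Note also that your route is not independent of the paper's machinery: through Theorem \ref{lem2} it still rests on Lemma \ref{lem1} and the Rogers--Fine identity.
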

Theorem \ref{lem3} immediately implies the following.
\begin{corollary}\label{cor1}
	For $k\in\mathbb{N}$ and $1\le m\le k$, we have
	\[
	\operatorname{coeff}_{\left[q^m\right]}\left(q\omega(q)-F_{k,1}(q)\right)=0\ \ \text{for}\ \ 1\le m\le 2k.
	\]
\end{corollary}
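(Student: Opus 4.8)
The plan is to deduce the corollary directly from Theorem~\ref{lem3}; essentially all of the substance lies in that theorem, and what remains is only to unwind the definition of the coefficient functional $\operatorname{coeff}_{\left[q^m\right]}$.

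By Theorem~\ref{lem3} we may write $q\omega(q)-F_{k,1}(q)=q^{2k+1}E_k(q)$, where $E_k(q)=\sum_{j\ge 0}e_k(j)q^j\in\mathbb{Z}[[q]]$ has no negative powers of $q$ and satisfies $e_k(0)=1$. Multiplying by $q^{2k+1}$ gives
\[
q\omega(q)-F_{k,1}(q)=\sum_{j\ge 0}e_k(j)\,q^{2k+1+j},
\]
a power series all of whose nonzero terms have degree at least $2k+1$. Hence for every integer $m$ with $0\le m\le 2k$ we have $\operatorname{coeff}_{\left[q^m\right]}\bigl(q\omega(q)-F_{k,1}(q)\bigr)=0$, which in particular yields the asserted vanishing for $1\le m\le 2k$.

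I would also remark that the condition $e_k(0)=1$ from Theorem~\ref{lem3} makes this sharp: the coefficient of $q^{2k+1}$ in $q\omega(q)-F_{k,1}(q)$ equals $1\neq 0$, so $2k$ cannot be replaced by any larger bound. There is no genuine obstacle at this step; the only ``work'' is the bookkeeping observation that multiplication by $q^{2k+1}$ annihilates every coefficient in degree below $2k+1$, so the difficulty of the whole statement is entirely inherited from the proof of Theorem~\ref{lem3}.
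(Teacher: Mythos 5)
Your argument is exactly the one the paper intends: Corollary \ref{cor1} is stated as an immediate consequence of Theorem \ref{lem3}, and your observation that multiplying the power series $E_k(q)\in\mathbb{Z}[[q]]$ by $q^{2k+1}$ kills all coefficients in degrees $0$ through $2k$ is the whole content. The sharpness remark using $e_k(0)=1$ is a correct bonus consistent with the paper.
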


The remainder of the paper is organized as follows. In Section \ref{sec:prelim} we recall some standard $q$-series transformations and define notation. In Section \ref{sec:formulas} we transform $F_{k,1}(q)$ into certain forms which assist in proving Theorems \ref{thm1}, \ref{lem2}, and \ref{lem3}. In Section \ref{mainresults}, we prove Theorem \ref{thm1} and Theorem \ref{thm2}, showing the positivity of the coefficients of $F_{k,1}(q)$ for $5\le k \leq 10$. Next, we prove \Cref{lem2}, and \Cref{lem3}, exhibiting the relations between $F_{k,1}$ and the third order mock theta function $\omega$. Finally, in \Cref{conc}, we presents a few plausible approaches to address \Cref{ABconj}.

\section*{Acknowledgements}
The first and the second author have received funding from the European Research Council (ERC)
under the European Union's Horizon 2020 research and innovation programme (grant agreement
No. 101001179). The authors thank Caner Nazaroglu for his helpful comments and suggestions.

\section{Preliminaries}\label{sec:prelim}
Recall Heine's transformation \cite[Corollary 1.2.4]{AndB}. %{\kbf Collect all transformations used in some preliminary section.}
\begin{lemma}\label{HeineTransform}
	For $|q|, |t|<1$, $0<|c|<|b|<1$, we have
	\[
	\sum_{n\ge 0}\frac{(a,b)_nt^n}{(q,c)_n}=\frac{\left(\frac cb,bt\right)_{\infty}}{(c,t)_{\infty}}\sum_{n\ge 0}\frac{\left(\frac{abt}{c},b\right)_n\pa{c}{b}^n}{(q,bt)_n}.
	\]
\end{lemma}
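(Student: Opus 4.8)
The plan is to derive the stated identity from the single nontrivial input that I take for granted, namely the $q$-binomial theorem (Cauchy's identity)
\[
\sum_{n\ge 0}\frac{(a)_n}{(q)_n}z^n=\frac{(az)_\infty}{(z)_\infty}\qquad(|z|<1).
\]
Writing the left-hand side as the basic hypergeometric series ${}_2\phi_1(a,b;c;q,t)=\sum_{n\ge 0}\frac{(a,b)_nt^n}{(q,c)_n}$, I would first establish the intermediate (single) Heine transformation
\[
\sum_{n\ge 0}\frac{(a,b)_nt^n}{(q,c)_n}=\frac{(b,at)_\infty}{(c,t)_\infty}\sum_{m\ge 0}\frac{\left(\frac cb,t\right)_mb^m}{(q,at)_m},
\]
and then apply it a second time, after using the symmetry of the summand in the two upper parameters $a$ and $b$, to reach the asymmetric form in the statement.

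For the intermediate transformation I would expand the quotient $\frac{(b)_n}{(c)_n}$. Using $(x)_n=\frac{(x)_\infty}{(xq^n)_\infty}$ gives $\frac{(b)_n}{(c)_n}=\frac{(b)_\infty}{(c)_\infty}\cdot\frac{(cq^n)_\infty}{(bq^n)_\infty}$, and the $q$-binomial theorem (with $z=bq^n$ and parameter $c/b$) yields
\[
\frac{(cq^n)_\infty}{(bq^n)_\infty}=\sum_{m\ge 0}\frac{\left(\frac cb\right)_m}{(q)_m}b^mq^{nm}.
\]
Substituting this into the left-hand side, interchanging the two summations, and re-summing the inner series in $n$ once more by the $q$-binomial theorem,
\[
\sum_{n\ge 0}\frac{(a)_n}{(q)_n}(tq^m)^n=\frac{(atq^m)_\infty}{(tq^m)_\infty}=\frac{(at)_\infty}{(t)_\infty}\cdot\frac{(t)_m}{(at)_m},
\]
collects the prefactor $\frac{(b,at)_\infty}{(c,t)_\infty}$ and leaves exactly the intermediate transformation displayed above.

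To finish, I would apply this intermediate transformation once more to the inner series $\sum_m\frac{\left(t,\frac cb\right)_mb^m}{(q,at)_m}$, with the roles of the two upper parameters $t$ and $c/b$ swapped, producing the new upper parameters $\frac{at}{c/b}=\frac{abt}{c}$ and $b$, lower parameter $bt$, and argument $c/b$. Multiplying the two infinite $q$-Pochhammer prefactors, the factors $(b,at)_\infty$ cancel against $(at,b)_\infty$, leaving precisely $\frac{\left(\frac cb,bt\right)_\infty}{(c,t)_\infty}\sum_{n\ge 0}\frac{\left(\frac{abt}{c},b\right)_n\left(\frac cb\right)^n}{(q,bt)_n}$ after relabeling, which is the claimed identity. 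The main technical point, and the only place the hypotheses enter, is the justification of the interchange of the order of summation: the constraints $|q|,|t|<1$ and $0<|c|<|b|<1$ guarantee absolute convergence of the double series (the inner expansion converges since $|b|<1$, and the final transformed series converges since its argument $c/b$ satisfies $|c/b|<1$), so Fubini's theorem licenses the rearrangement. The remaining work is the Pochhammer bookkeeping across the two applications, which is routine but must be tracked carefully so that the output matches the statement exactly.
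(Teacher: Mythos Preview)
Your argument is correct. The paper does not prove this lemma at all; it simply cites it as \cite[Corollary~1.2.4]{AndB} and moves on. Your derivation --- expanding $\frac{(b)_n}{(c)_n}=\frac{(b)_\infty}{(c)_\infty}\cdot\frac{(cq^n)_\infty}{(bq^n)_\infty}$ via the $q$-binomial theorem, interchanging summations (justified by the stated hypotheses), resumming in $n$, and then iterating the resulting single Heine transformation with the upper parameters $t$ and $c/b$ swapped --- is exactly the classical proof going back to Heine, and is the argument one finds in standard references such as Andrews' book or Gasper--Rahman. The Pochhammer bookkeeping you describe is correct: the intermediate prefactor $\frac{(b,at)_\infty}{(c,t)_\infty}$ combined with the second prefactor $\frac{(c/b,bt)_\infty}{(at,b)_\infty}$ indeed collapses to $\frac{(c/b,bt)_\infty}{(c,t)_\infty}$, matching the statement.
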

By \cite[Theorem 1]{A0}, we have the following transformation.
\begin{lemma}\label{ThmA0}
	We have
	\begin{align*}
	\sum_{n\ge 0}\frac{\left(B,-Abq\right)_nq^n}{\left(-aq,-bq\right)_n}&=-\frac{a^{-1}\left(B,-Abq\right)_{\infty}}{\left(-aq,-bq\right)_{\infty}}\sum_{n\ge 0}\frac{\left(A^{-1}\right)_n\pa{Abq}{a}^n}{\left(-\frac{B}{a}\right)_{n+1}}\\
	&\hspace{2.5 cm}+\left(1+b\right)\sum_{n\ge 0}\frac{\left(-a^{-1}\right)_{n+1}\left(-\frac{ABq}{a}\right)_n\left(-b\right)^n}{\left(-\frac{B}{a},\frac{Abq}{a}\right)_{n+1}}.
	\end{align*}
\end{lemma}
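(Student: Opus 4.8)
The plan is to read this not as a bare summation identity but as a transformation between nonterminating basic hypergeometric functions of the parameters $a,b,A,B$, with $q$ fixed and $|q|<1$. The first move is to recognize the left-hand side as a ${}_3\phi_2$ rather than a ${}_2\phi_1$: inserting a redundant factor $(q;q)_n/(q;q)_n$ gives
\[
\sum_{n\ge 0}\frac{(B,-Abq)_n q^n}{(-aq,-bq)_n}
={}_3\phi_2\!\left(\begin{matrix}B,\,-Abq,\,q\\ -aq,\,-bq\end{matrix};\,q,\,q\right),
\]
a nonterminating series that converges absolutely for $|q|<1$. Two structural features then dictate the method. The coefficient of the first sum on the right is the \emph{full} infinite product $(B,-Abq)_\infty/(-aq,-bq)_\infty$, which is precisely the kind of infinite-product quotient produced by Heine's transformation (Lemma \ref{HeineTransform}); and every parameter on the right carries an inverted base, $a\mapsto a^{-1}$. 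Together these are the signature of a three-term (Heine/analytic-continuation) transformation for a nonterminating series, so I would aim to realize the identity as a specialization of such a relation.

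Concretely I would proceed in three steps. First, apply Heine's transformation, or the ${}_3\phi_2$-analogue of one of its iterates, to the series above so that the product $(B,-Abq)_\infty/(-aq,-bq)_\infty$ is split off as a prefactor; this is exactly the quotient Heine extracts. Second, the transformed series remains nonterminating and must be decomposed, via the nonterminating three-term relation (the $q$-analogue of the Barnes/analytic-continuation formula), into a principal piece and a companion piece whose argument and base are reciprocated. These two pieces are to become the two sums on the right: the principal one carrying the numerator $(A^{-1})_n$ and a single shifted denominator $(-B/a)_{n+1}$, and the companion one carrying $(-a^{-1})_{n+1}$ together with the doubly shifted denominator $(-B/a,\,Abq/a)_{n+1}$. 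Third, I would normalize the Pochhammer symbols produced by the transformation into the stated shifted-index form using the shift identity $(x;q)_{n+1}=(1-x)(xq;q)_n$ and the standard reflection rules for $(x^{-1};q)_n$, at which stage the scalars $a^{-1}$, $-B/a$, and the factor $1+b$ should emerge from the boundary and normalizing terms of the decomposition.

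The main obstacle, and the step requiring the greatest care, is producing the two leading scalars exactly: the overall $-a^{-1}$ in front of the first sum and the factor $1+b=1-(-b)$ in front of the second. These do not arise from the generic summand but from the edge contributions of the transformation — a residue, or equivalently the $n=0$ term of a telescoping/partial-fraction decomposition of $1/((-aq;q)_n(-bq;q)_n)$ — and this is precisely where a stray factor $(1-x)$ or a misplaced power of $q$ is easiest to introduce. For this reason I would corroborate the final identity by a second, more mechanical route that avoids reverse-engineering the exact transformation chain: holding all parameters fixed but one (say $a$), I would verify that both sides satisfy the same first-order contiguous $q$-difference relation in that parameter, and then match them at a convenient specialization (for instance $B=0$, or $a\to\infty$), where both sides collapse to elementary series; agreement of the recurrence together with a single initial value forces the identity. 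Throughout, one must delineate the region of $(a,b,A,B)$ in which all series and all interchanges of summation converge absolutely, mirroring the hypotheses imposed in Lemma \ref{HeineTransform}, since the identity is an equality of analytic functions only on that domain and is then extended by analytic continuation.
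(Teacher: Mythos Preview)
The paper does not prove this lemma: it is quoted verbatim as \cite[Theorem~1]{A0} (Andrews, \emph{Ramanujan's ``lost'' notebook~I}), so there is no in-paper argument to compare your proposal against.

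That said, what you have written is not a proof but a strategy outline, and the outline has a genuine gap. You correctly recognize the left side as a ${}_3\phi_2$ and correctly note that the infinite-product prefactor on the right is of Heine type, but the step that actually produces \emph{two} sums on the right --- your ``nonterminating three-term relation'' --- is only named, never identified or applied. There are several inequivalent three-term relations for nonterminating ${}_2\phi_1$/${}_3\phi_2$, and you do not say which one, with which parameter assignment, yields precisely a sum with numerator $(A^{-1})_n$ and a single shifted denominator $(-B/a)_{n+1}$ alongside a companion with $(-a^{-1})_{n+1}$ in the numerator and a doubly shifted denominator. You yourself flag the scalars $-a^{-1}$ and $1+b$ as the delicate point and then say only that they ``should emerge''; that is an aspiration, not an argument.

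Your fallback plan --- show both sides satisfy the same first-order $q$-contiguous relation in one parameter and match at a specialization --- is in fact closer to how Andrews actually established the identity in \cite{A0}: he derives a functional equation for the left-hand side (essentially $F = \alpha + \beta\cdot(\text{shifted }F)$), iterates it, and identifies the two resulting series with the two sums on the right. If you want a self-contained proof, I would drop the Heine/three-term sketch and execute that route directly; as written, neither branch of your proposal is carried far enough to constitute a proof.
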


We next state the Rogers--Fine identity \cite[equation (2.7)]{A0}.
\begin{lemma}\label{RF}
	We have
	\begin{equation*}
	\sum_{n\ge 0}\frac{\left(\a\right)_nw^n}{\left(\beta\right)_n}=\sum_{n\ge 0}\frac{\left(\a,\frac{\a wq}{\b}\right)_n\b^nw^nq^{n^2-n}\left(1-\a w q^{2n}\right)}{\left(\b\right)_n\left(w\right)_{n+1}}.
	\end{equation*}
\end{lemma}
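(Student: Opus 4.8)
The plan is to prove this via Fine's functional-equation method: derive a single first-order $q$-difference relation in the variable $w$, show that both sides of the asserted identity satisfy it, and then pin down the common solution by iteration. Write $f(\a,\b;w):=\sum_{n\ge0}\frac{(\a)_nw^n}{(\b)_n}$ for the left-hand side and $g(\a,\b;w):=\sum_{n\ge0}R_n$ for the right-hand side, where $R_n$ denotes the $n$-th summand displayed in the statement.

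First I would derive the functional equation
\[
(1-w)\,f(\a,\b;w)=1+\frac{(\b-\a)w}{1-\b}\,f(\a,\b q;qw).
\]
This comes from the elementary splitting $\frac{1-\a q^{n-1}}{1-\b q^{n-1}}=1+\frac{(\b-\a)q^{n-1}}{1-\b q^{n-1}}$, which applied to consecutive Pochhammer quotients gives $\frac{(\a)_n}{(\b)_n}=\frac{(\a)_{n-1}}{(\b)_{n-1}}+\frac{(\b-\a)q^{n-1}(\a)_{n-1}}{(\b)_n}$. Multiplying by $w^n$, summing over $n\ge1$, and using $(\b)_{n+1}=(1-\b)(\b q)_n$ collects the two resulting series into $wf(\a,\b;w)$ and $\frac{(\b-\a)w}{1-\b}f(\a,\b q;qw)$, respectively.

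Second — and this is the crux — I would show that $g$ satisfies the same functional equation. Using $(w)_{n+1}=(1-w)(wq)_n$ one has $(1-w)R_n=\frac{(\a)_n(\a wq/\b)_n\,\b^nw^nq^{n^2-n}(1-\a wq^{2n})}{(\b)_n(wq)_n}$, while under the substitution $(\b,w)\mapsto(\b q,qw)$ the factor $(\a wq/\b)_n$ is left invariant and $(1-\a wq^{2n})$ is sent to $(1-\a wq^{2n+1})$. The essential point is that the factor $(1-\a wq^{2n})$ is precisely what makes the two families of terms recombine after the index shift: clearing the denominators $(\b)_n(wq)_n$ and $(\b)_n(1-\b q^n)(qw)_{n+1}$ and matching the coefficient of each power of $w$ reduces the claim to a finite Pochhammer identity. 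I expect this verification to be the main obstacle, since it requires carefully tracking the $q^{n^2-n}$ and $\b^n$ factors through the shift; the difficulty is bookkeeping rather than conceptual.

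Finally I would iterate the functional equation for $f$ and for $g$ separately. After $N$ steps each is expressed as an explicit finite sum plus the remainder $\prod_{j=0}^{N-1}\frac{(\b q^j-\a)q^jw}{(1-\b q^j)(1-q^jw)}$ times $f(\a,\b q^N;q^Nw)$, respectively $g(\a,\b q^N;q^Nw)$. For $|q|,|w|<1$ the product tends to $0$ while both $f(\a,\b q^N;q^Nw)$ and $g(\a,\b q^N;q^Nw)$ tend to $1$ as $q^Nw\to0$; hence the remainders vanish and $f$ and $g$ are represented by one and the same limiting series, yielding $f=g$. As a byproduct the greedy iteration also produces the alternative closed form $\sum_{n\ge0}\frac{(-\a)^n(\b/\a)_n\,q^{n(n-1)/2}w^n}{(\b)_n(w)_{n+1}}$ for the left-hand side, which serves as a useful independent consistency check on the whole computation.
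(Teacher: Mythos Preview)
The paper does not prove this lemma at all: it is simply quoted as the Rogers--Fine identity, with a citation to \cite[equation~(2.7)]{A0}. So there is no argument in the paper for you to compare against.

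On the merits of your sketch: the derivation of the functional equation $(1-w)f(\alpha,\beta;w)=1+\tfrac{(\beta-\alpha)w}{1-\beta}\,f(\alpha,\beta q;qw)$ for the left-hand side is correct, and the iteration/uniqueness scheme you describe is the classical Fine approach. Two remarks, though. First, observe that iterating \emph{this} relation (which shifts $\beta$ as well as $w$) produces the alternative form $\sum_{n\ge0}\frac{(-\alpha)^n(\beta/\alpha)_n\,q^{\binom n2}w^n}{(\beta)_n(w)_{n+1}}$ you mention at the end, \emph{not} the Rogers--Fine sum; so everything really does rest on the step you label the ``main obstacle'', namely that $g$ satisfies the same relation. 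Second, that step is more than bookkeeping: if you write out $(1-w)g(\alpha,\beta;w)-1-\tfrac{(\beta-\alpha)w}{1-\beta}g(\alpha,\beta q;qw)$ and try to match term by term, the coefficients do not cancel individually --- one needs to split $(1-\alpha wq^{2n})$ and arrange a telescoping between adjacent summands. Until that computation is actually carried out, the proposal is a plan rather than a proof. The route taken in \cite{A0} instead iterates a functional equation that shifts only $w$, which delivers the Rogers--Fine sum directly and sidesteps the separate verification for $g$.
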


The \emph{Gaussian polynomial} is defined by (see \cite[Definition 3.1]{Andbook})
\begin{equation*}
\begin{bmatrix}
n\\m
\end{bmatrix}_q:=\begin{cases}
\frac{(q)_n}{(q)_m(q)_{n-m}} &\quad \text{if}\ 0\le m\le n,\\
0 &\quad \text{otherwise}.
\end{cases}
\end{equation*}
From \cite[equation (3.3.6)]{Andbook}, we have, for $n\in \mathbb{N}$,
\begin{equation}\label{Gaussian}
\left(a;q\right)_N=\sum_{j=0}^{N}\begin{bmatrix}
N\\
j
\end{bmatrix}_{q}(-1)^j a^j q^{\frac{j^2-j}{2}}.
\end{equation}

\section{Some $q$-series transformations}\label{sec:formulas}
The following lemma is a key step for proving Theorem \ref{thm1}.

\begin{lemma}\label{newlem1}
	For $k\in \mathbb{N}_{\ge 3}$, we have
	\[
	F_{k,1}(q)=\frac{q}{\left(1-q\right)\left(q;q^2\right)_{k-1}}\left(1+\left(1-q\right)\sum_{n=1 }^{k-2}\frac{\left(q^{4-2k};q^2\right)_n}{\left(q^2;q^2\right)_n}\frac{q^{(2k-1)n}}{1-q^{2n+1}}\right).
	\]	
\end{lemma}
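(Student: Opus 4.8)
The plan is to identify the series \eqref{def} as a basic hypergeometric ${}_2\phi_1$ in base $q^2$ (after extracting the $n$-independent infinite products), to apply Heine's transformation (Lemma~\ref{HeineTransform}) in the form that makes the transformed series terminate, and then to simplify the resulting finite sum.

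First I would rewrite the summand of \eqref{def}. Splitting each shifted infinite Pochhammer symbol as $(q^{2n+2};q^2)_\infty=(q^2;q^2)_\infty/(q^2;q^2)_n$, $(q^{2n+2k};q^2)_\infty=(q^{2k};q^2)_\infty/(q^{2k};q^2)_n$ and $(q^{2n+1};q^2)_\infty=(q;q^2)_\infty/(q;q^2)_n$, one obtains
\[
F_{k,1}(q)=q\,\frac{(q^2;q^2)_\infty\,(q^{2k};q^2)_\infty}{(q;q^2)_\infty^2}\sum_{n\ge0}\frac{(q;q^2)_n^2}{(q^2;q^2)_n\,(q^{2k};q^2)_n}\,q^{2n}.
\]
The inner sum is $\sum_{n\ge0}(a,b)_n t^n/(q,c)_n$ with $q$ replaced by $q^2$ and $a=b=q$, $c=q^{2k}$, $t=q^2$; since $0<|q^{2k}|<|q|<1$ for $k\ge 3$, Lemma~\ref{HeineTransform} applies for $0<|q|<1$, and the resulting identity then extends to $\mathbb{Z}[[q]]$.

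With these parameters one has $c/b=q^{2k-1}$, $bt=q^3$ and $abt/c=q^{4-2k}$, so Heine's transformation turns the sum into $\frac{(q^{2k-1},q^3;q^2)_\infty}{(q^{2k},q^2;q^2)_\infty}\sum_{n\ge0}\frac{(q^{4-2k},q;q^2)_n}{(q^2,q^3;q^2)_n}q^{(2k-1)n}$. Multiplying by the prefactor above, cancelling $(q^2;q^2)_\infty$ and $(q^{2k};q^2)_\infty$, and then using $(q^3;q^2)_\infty=(q;q^2)_\infty/(1-q)$ together with $(q;q^2)_\infty=(q;q^2)_{k-1}(q^{2k-1};q^2)_\infty$, the infinite products collapse to $\frac{q}{(1-q)(q;q^2)_{k-1}}$, leaving
\[
F_{k,1}(q)=\frac{q}{(1-q)(q;q^2)_{k-1}}\sum_{n\ge0}\frac{(q^{4-2k},q;q^2)_n}{(q^2,q^3;q^2)_n}\,q^{(2k-1)n}.
\]

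To finish, I would simplify the remaining sum: for $k\ge 3$ the factor $(q^{4-2k};q^2)_n$ contains $1-q^{2n+2-2k}$, which vanishes at $n=k-1$, so the sum truncates at $n=k-2$; the $n=0$ term equals $1$; and $(q;q^2)_n/(q^3;q^2)_n=(1-q)/(1-q^{2n+1})$ for $n\ge 1$ rewrites the remaining terms as $(1-q)\sum_{n=1}^{k-2}\frac{(q^{4-2k};q^2)_n}{(q^2;q^2)_n}\frac{q^{(2k-1)n}}{1-q^{2n+1}}$, which is exactly the bracketed expression in the statement. The only genuinely non-routine point is choosing this particular form of Heine in base $q^2$ so that the image series terminates --- the termination, coming precisely from the zero of $(q^{4-2k};q^2)_n$, is what produces the finite sum in the statement --- together with the (standard) observation that the analytic identity valid for $0<|q|<1$ passes to an identity of formal power series; everything else is bookkeeping with Pochhammer symbols.
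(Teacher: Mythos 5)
Your proposal is correct and follows essentially the same route as the paper: rewrite $F_{k,1}(q)$ as an infinite-product prefactor times the sum $\sum_{n\ge0}\frac{(q;q^2)_n^2 q^{2n}}{(q^2,q^{2k};q^2)_n}$, apply Heine's transformation with $q\mapsto q^2$, $a=b=q$, $c=q^{2k}$, $t=q^2$, simplify the products to $\frac{q}{(1-q)(q;q^2)_{k-1}}$, and truncate the transformed sum at $n=k-2$ using the vanishing of $(q^{4-2k};q^2)_n$ for $n\ge k-1$. No gaps.
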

\begin{proof}
	Using \eqref{def}, we obtain 
	\begin{align}\label{eqn1}
	F_{k,1}(q)
	&=\frac{q\left(q^2;q^2\right)^2_{\infty}}{\left(q;q^2\right)^2_{\infty}\left(q^2;q^2\right)_{k-1}}\sum_{n\ge 0}\frac{\left(q;q^2\right)^2_nq^{2n}}{\left(q^2;q^2\right)_n\left(q^{2k};q^2\right)_{n}}.
	\end{align}
	Applying Lemma \ref{HeineTransform} with $q\mapsto q^2$, $a=b=q$, $c=q^{2k}$, and $t=q^2$, we have
	\begin{equation*}
	\sum_{n\ge 0}\frac{\left(q;q^2\right)^2_nq^{2n}}{\left(q^2,q^{2k};q^2\right)_{n}}=\frac{\left(q^{2k-1},q^3;q^2\right)_{\infty}}{\left(q^{2k},q^2;q^2\right)_{\infty}}\sum_{n\ge 0}\frac{\left(q^{4-2k},q;q^2\right)_n}{\left(q^2,q^3;q^2\right)_n}q^{(2k-1)n}.
	\end{equation*}
	Plugging this into \eqref{eqn1}, it follows that 
	\begin{align}\label{eqn2}
	F_{k,1}(q)
	&\!=\frac{q}{\left(1-q\right)\left(q;q^2\right)_{k-1}}\left(1+\left(1-q\right)\sum_{n\ge 1}\frac{\left(q^{4-2k};q^2\right)_n}{\left(q^2;q^2\right)_n}\frac{q^{(2k-1)n}}{1-q^{2n+1}}\right).
	\end{align}
	Note that for $n\ge k-1$, $(q^{4-2k};q^2)_n=
	0$. Therefore, by \eqref{eqn2}, we have for $k\ge 3$
	\begin{align*}%\label{Fk1}
	F_{k,1}(q)=\frac{q}{\left(1-q\right)\left(q;q^2\right)_{k-1}}\left(1+\left(1-q\right)\sum_{n=1 }^{k-2}\frac{\left(q^{4-2k};q^2\right)_n}{\left(q^2;q^2\right)_n}\frac{q^{(2k-1)n}}{1-q^{2n+1}}\right).
	\end{align*}
	This concludes the proof.
\end{proof}

Next, we present two lemmas which are used in the proof of Theorem \ref{lem3}. 

\begin{lemma}\label{lem1}
	For $k\in \mathbb{N}$, we have
	\[
	F_{k,1}(q)=\sum_{n\ge 0}\frac{\left(q^{2k-1};q^2\right)_nq^{n+1}}{\left(q;q^2\right)_{n+1}}.
	\]	
\end{lemma}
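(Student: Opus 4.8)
The goal is to show
\[
F_{k,1}(q)=\sum_{n\ge 0}\frac{\left(q^{2k-1};q^2\right)_nq^{n+1}}{\left(q;q^2\right)_{n+1}},
\]
so the plan is to start from the double-product form of $F_{k,1}(q)$ in \eqref{eqn1} and manipulate the inner sum into this single hypergeometric series. Concretely, I would begin with
\[
F_{k,1}(q)=\frac{q\left(q^2;q^2\right)^2_{\infty}}{\left(q;q^2\right)^2_{\infty}\left(q^2;q^2\right)_{k-1}}\sum_{n\ge 0}\frac{\left(q;q^2\right)^2_nq^{2n}}{\left(q^2;q^2\right)_n\left(q^{2k};q^2\right)_{n}},
\]
exactly as derived in the proof of Lemma~\ref{newlem1}. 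The target series on the right-hand side of the claim is a $_2\phi_1$-type series in base $q^2$ (after pulling the $q$ out and relabeling), with numerator parameter $q^{2k-1}$ and an argument that is essentially $q$; so the natural tool is Heine's transformation (Lemma~\ref{HeineTransform}), applied with a different choice of parameters than the one used in Lemma~\ref{newlem1}.

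The key steps I would carry out are: (1) rewrite the inner sum $\sum_{n\ge0}\frac{(q;q^2)_n^2 q^{2n}}{(q^2;q^2)_n(q^{2k};q^2)_n}$ as a $_2\phi_1$ in base $q^2$, identifying $a=b=q$, $c=q^{2k}$, $t=q^2$; (2) apply Heine's transformation (with base $q\mapsto q^2$) to land on $\frac{(q^{2k-1},q^3;q^2)_\infty}{(q^{2k},q^2;q^2)_\infty}\sum_{n\ge0}\frac{(q^{4-2k},q;q^2)_n}{(q^2,q^3;q^2)_n}q^{(2k-1)n}$ — this is already done in Lemma~\ref{newlem1}, but I would instead apply Heine in a way that produces $(q^{2k-1};q^2)_n$ in the numerator, which means choosing the parameter assignment so that $q^{2k-1}$ appears; (3) simplify the resulting infinite-product prefactor against $\frac{q(q^2;q^2)_\infty^2}{(q;q^2)_\infty^2(q^2;q^2)_{k-1}}$, using $(q;q^2)_\infty = (q;q^2)_{k-1}(q^{2k-1};q^2)_\infty$ and telescoping of $(q^2;q^2)_\infty/(q^2;q^2)_{k-1}$; (4) match the remaining series with $\sum_{n\ge0}\frac{(q^{2k-1};q^2)_n q^{n+1}}{(q;q^2)_{n+1}}$. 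Alternatively — and this may be cleaner — I would start directly from \eqref{def}, expand $(q^{2n+2k};q^2)_\infty/(q^{2n+2};q^2)_\infty$ cleverly, or interchange the order of summation after writing $(q^{2n+2},q^{2n+2k};q^2)_\infty$ as a ratio of global products times finite $q$-Pochhammer truncations, to collapse the double sum into the stated single sum.

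There is in fact a very short route worth trying first: observe that the claimed series has the shape $\sum_{n\ge0}\frac{(q^{2k-1};q^2)_n}{(q;q^2)_{n+1}}q^{n+1}$, whose $n$-th term is $\frac{q^{n+1}}{1-q^{2n+1}}\cdot\frac{(q^{2k-1};q^2)_n}{(q;q^2)_n}$; comparing with the form in Lemma~\ref{newlem1}, namely $\frac{q}{(1-q)(q;q^2)_{k-1}}\bigl(1+(1-q)\sum_{n=1}^{k-2}\frac{(q^{4-2k};q^2)_n}{(q^2;q^2)_n}\frac{q^{(2k-1)n}}{1-q^{2n+1}}\bigr)$, one sees that both are rational in reshaped form, so a direct termwise identity — via the reversal symmetry $(q^{4-2k};q^2)_n = (-1)^n q^{(2-2k)n + n(n-1)}\,\frac{(q^{2k-2};q^2)_n}{\text{(something)}}$ combined with $q$-Pochhammer flipping $(q^{2k-1};q^2)_n/(q;q^2)_n$ — should convert one into the other. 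I would reconcile the prefactor $\frac{1}{(1-q)(q;q^2)_{k-1}}$ with the $n=0$ term $\frac{q}{1-q}$ of the target and then check that the tail sums agree.

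I expect the main obstacle to be bookkeeping: getting all the infinite-product prefactors from Heine's transformation to cancel exactly against $\frac{q(q^2;q^2)_\infty^2}{(q;q^2)_\infty^2(q^2;q^2)_{k-1}}$, and in particular correctly handling the factor $(q^{2k-1};q^2)_\infty$ versus the finite $(q;q^2)_{k-1}$ and the $(q^3;q^2)_n$ versus $(q;q^2)_{n+1}$ denominators (the shift by one in the argument of the Pochhammer). A secondary subtlety is ensuring convergence conditions for Heine's transformation are met for the chosen parameters (the hypothesis $0<|c|<|b|<1$ in Lemma~\ref{HeineTransform}), which may force me to apply a limiting/analytic-continuation argument or to pick the symmetric form of Heine that avoids the constraint. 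Once the products are lined up, the final series-matching step should be immediate.
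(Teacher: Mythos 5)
Your route is genuinely different from the paper's. The paper proves Lemma \ref{lem1} by applying Lemma \ref{ThmA0} (Andrews' partial-theta transformation) to \eqref{eqn3}, with $q\mapsto q^2$, $a=-1$, $b=-q^{2k-2}$, $A=q^{1-2k}$, $B=q$; the point of the specialization $a=-1$ is that $\left(-a^{-1}\right)_{n+1}=\left(1;q^2\right)_{n+1}=0$, so the second sum in Lemma \ref{ThmA0} dies and the surviving sum is exactly the claimed series. You propose instead to stay with Heine applied to the inner sum of \eqref{eqn1}. That idea does work, and is arguably the more standard and shorter argument, but not in the form you describe: in the inner sum of \eqref{eqn1} the two numerator parameters coincide ($a=b=q$), so there is no ``different parameter assignment'' available in the transformation actually stated as Lemma \ref{HeineTransform}; a single application of that form can only yield the series with argument $q^{(2k-1)n}$ used in Lemma \ref{newlem1}, and (as one checks) iterating it on this input merely undoes itself. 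What you need is the companion \emph{first} Heine transformation,
\[
\sum_{n\ge 0}\frac{(a,b)_nt^n}{(q,c)_n}=\frac{(b,at)_{\infty}}{(c,t)_{\infty}}\sum_{n\ge 0}\frac{\left(\frac cb,t\right)_n b^n}{(q,at)_n},
\]
which is not obtainable from Lemma \ref{HeineTransform} by just swapping $a$ and $b$, though it is equally standard (it appears alongside the cited result in Andrews--Berndt). With $q\mapsto q^2$, $a=b=q$, $c=q^{2k}$, $t=q^2$ it gives
\[
\sum_{n\ge 0}\frac{\left(q;q^2\right)^2_nq^{2n}}{\left(q^2,q^{2k};q^2\right)_{n}}=\frac{\left(q,q^3;q^2\right)_{\infty}}{\left(q^{2k},q^2;q^2\right)_{\infty}}\sum_{n\ge 0}\frac{\left(q^{2k-1};q^2\right)_nq^{n}}{\left(q^3;q^2\right)_{n}},
\]
and plugging this into \eqref{eqn1}, the products collapse via $\left(q^2;q^2\right)_{k-1}\left(q^{2k};q^2\right)_\infty=\left(q^2;q^2\right)_\infty$ and $(1-q)\left(q,q^3;q^2\right)_\infty=\left(q;q^2\right)^2_\infty$ to the prefactor $\frac{q}{1-q}$; since $\left(q;q^2\right)_{n+1}=(1-q)\left(q^3;q^2\right)_n$, this is exactly the statement of the lemma. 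So your bookkeeping worries do resolve, and the convergence condition is harmless since $|c|=|q|^{2k}<|q|=|b|<1$.

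Two cautions on your fallback routes. The ``very short route'' of converting Lemma \ref{newlem1} into the target termwise cannot work as stated: the sum in Lemma \ref{newlem1} is finite (it terminates at $n=k-2$), while the target is an infinite series, so only the full generating functions, not individual terms, can be matched. And the vaguer alternative of manipulating \eqref{def} directly by interchanging summations is essentially a re-derivation of \eqref{eqn1}, after which you are back to needing a genuine series transformation such as the first Heine transformation above or the paper's Lemma \ref{ThmA0}.
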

\begin{proof}
	By \eqref{eqn1}, we have
	\begin{equation}\label{eqn3}
	F_{k,1}(q)=\frac{\left(q^2;q^2\right)^2_{\infty}}{(q;q^2)^2_{\infty}\left(q^2;q^2\right)_{k-1}}\sum_{n\ge 0}\frac{\left(q;q^2\right)^2_nq^{2n+1}}{\left(q^2,q^{2k};q^2\right)_n}.
	\end{equation}
	Taking $q\mapsto q^2$, $a=-1, b=-q^{2k-2}, A=q^{1-2k}$, and $B=q$ in Lemma \ref{ThmA0}, using the fact that for $n\in\N_0$, $(1;q^2)_{n+1}=0$, we obtain
	\[
	\sum_{n\ge 0}\frac{\left(q;q^2\right)^2_nq^{2n+1}}{\left(q^2,q^{2k};q^2\right)_n}=\frac{(q;q^2)^2_{\infty}}{\left(q^2,q^{2k};q^2\right)_{\infty}}\sum_{n\ge 0}\frac{\left(q^{2k-1};q^2\right)_n q^{n+1}}{\left(q;q^2\right)_{n+1}}.
	\] 
	Plugging this into \eqref{eqn3}, we get
	\begin{align*}
	F_{k,1}(q)&=\frac{\left(q^2;q^2\right)^2_{\infty}}{\left(q;q^2\right)^2_{\infty}\left(q^2;q^2\right)_{k-1}}\frac{(q;q^2)^2_{\infty}}{\left(q^2,q^{2k};q^2\right)_{\infty}}\sum_{n\ge 0}\frac{\left(q^{2k-1};q^2\right)_n q^{n+1}}{\left(q;q^2\right)_{n+1}}\\&=\sum_{n\ge 0}\frac{\left(q^{2k-1};q^2\right)_n q^{n+1}}{\left(q;q^2\right)_{n+1}}.
	\end{align*}
	This gives the lemma.
\end{proof}

%{\bf KB: Wouldn't the following be nice in the intro? \color{blue} K: place the lemma in the intro.}

%\begin{lemma}\label{lem2}
%	For $|q|<1$, we have
%	\[
%	\lim_{k\to\infty} F_{k,1}(q)= q\omega(q).
%	\]
%\end{lemma}

\section{Proof of Theorems \ref{thm1}, \ref{thm2}, \ref{lem2}, and \ref{lem3}}\label{mainresults}
We are now ready to prove Theorem \ref{thm1}.

\noindent \emph{Proof of Theorem \ref{thm1}}. We show Theorem \ref{thm1} in three parts. First we prove the claim for $k=5$. By Lemma \ref{newlem1}, we have
\begin{align*}
F_{5,1}(q)&=\frac{q}{\left(1-q\right)\left(q;q^2\right)_{4}}\left(1+\left(1-q\right)\sum_{n=1 }^{3}\frac{\left(q^{-6};q^2\right)_n}{\left(q^2;q^2\right)_n}\frac{q^{9n}}{1-q^{2n+1}}\right)  \\    
&= \frac{q}{(1-q)\left(1-q^3\right)\left(1-q^5\right)\left(1-q^7\right)}  \left(\frac{1}{1-q}+\frac{1-q^{-6}}{1-q^2}\frac{q^9}{1-q^3} \right.\\
&\hspace{.5cm}\left. +\frac{\left(1-q^{-6}\right)\left(1-q^{-4}\right)}{\left(1-q^2\right)\left(1-q^4\right)}\frac{q^{18}}{1-q^5}+\frac{\left(1-q^{-6}\right)\left(1-q^{-4}\right)\left(1-q^{-2}\right)}{\left(1-q^2\right)\left(1-q^4\right)\left(1-q^6\right)}\frac{q^{27}}{1-q^7}\right).
\end{align*}
Simplifying the factors in the summands on the right-hand side, we obtain
\begin{multline*}
F_{5,1}(q)=\frac{q}{(1-q)\left(1-q^3\right)\left(1-q^5\right)\left(1-q^7\right)} \\ \times\left(\frac{1}{1-q}-\left(1+q^2+q^4\right)\frac{q^3}{1-q^3}+\left(1+q^2+q^4\right)\frac{q^8}{1-q^5}-\frac{q^{15}}{1-q^7}\right).
\end{multline*}

%Thus
%\begin{multline*}
%F_{5,1}(q)=\frac{q}{(1-q)\left(1-q^3\right)\left(1-q^5\right)\left(1-q^7\right)} \\ \times\left(\frac{1}{1-q}-\left(1+q^2+q^4\right)\frac{q^3}{1-q^3}+\left(1+q^2+q^4\right)\frac{q^8}{1-q^5}-\frac{q^{15}}{1-q^7}\right).
%\end{multline*}
The claim follows once we prove that
\begin{equation*}
\frac{1}{1-q}\left(\frac{1}{1-q}-\frac{\left(1+q^2+q^4\right)q^3}{1-q^3}+\frac{\left(1+q^2+q^4\right)q^8}{1-q^5}-\frac{q^{15}}{1-q^7}\right) \succeq 0.
\end{equation*}
We now show that we can treat the first and the second together as well as the third and fourth. First we write
\begin{equation*}
\frac{\left(1+q^2+q^4\right)q^3}{1-q^3} = -1-q-q^2-q^4+\frac{1+q+q^2}{1-q^3}.
\end{equation*}
Then
\begin{equation*}
\frac{1}{1-q} + 1 + q + q^2 + q^4 - \frac{1+q+q^2}{1-q^3} \succeq 0.
\end{equation*}
Next we write
\begin{equation*}
\frac{\left(1+q^2+q^4\right)q^8}{1-q^5} - \frac{q^{15}}{1-q^7} = q^8 \bigg(\frac{1+q^2+q^4}{1-q^5}+1-\frac{1}{1-q^7}\bigg).
\end{equation*}
Now
\begin{equation*}
\frac{1+q^2+q^4}{1-q^5} - \frac{1}{1-q^7} \succeq \frac{1}{1-q^5} - \frac{1}{1-q^7} = \frac{q^5-q^7}{\left(1-q^5\right)\left(1-q^7\right)} = \frac{q^5\left(1-q^2\right)}{\left(1-q^5\right)\left(1-q^7\right)}.
\end{equation*}
Dividing by $\frac{1}{1-q}$ gives
\begin{equation*}
\frac{q^5(1+q)}{(1-q^5)(1-q^7)} \succeq 0.
\end{equation*}
This finishes the proof of the case $k=5$. 

Next, we show the case $k=6$. By Lemma \ref{newlem1}, we have
\begin{equation*}
F_{6,1}(q)  = \frac{q}{(1-q)(q;q^2)_5}\left(1+(1-q)\sum_{n=1}^4 \frac{\left(q^{-8};q^2\right)_n}{(q^2;q^2)_n} \frac{q^{11n}}{1-q^{2n+1}}\right).
\end{equation*}
Simplifying the factors $\frac{(q^{-8};q^2)_n}{(q^2;q^2)_n}$ (for $1\le n\le 4$) in the sum on the right-hand side of the above identity, we have
\begin{multline*}
F_{6,1}(q) = \frac{q}{(q;q^2)_5} \left(\frac{1}{1-q} - \frac{q^3\left(1+q^2+q^4+q^6\right)}{1-q^3} + \frac{q^8\left(1+q^2+q^4\right)\left(1+q^4\right)}{1-q^5}\right.\\
\left.- \frac{q^{15}\left(1+q^2+q^4+q^6\right)}{1-q^7} + \frac{q^{24}}{1-q^9}\right).
\end{multline*}
Using long division, we write
\begin{align*}
&-\frac{q^3 \left(1+q^2+q^4+q^6\right)}{1-q^3} =  2+q+q^2+q^3+q^4+q^6-\frac{2+q+q^2}{1-q^3},\\
&\frac{q^8\left(1+q^2+q^4\right)\left(1+q^4\right)}{1-q^5}=   -1-q-2q^2-q^3-q^4-q^5-q^6-2q^7-q^9-q^{11}\\
&\hspace{9.5 cm}+\frac{1+q+2q^2+q^3+q^4}{1-q^5},\\
& - \frac{q^{15} \left(1+q^2+q^4+q^6\right)}{1-q^7}   =   1+q+q^3+q^5+q^7+q^8+q^{10}+q^{12}+q^{14} \\
&\hspace{10.6 cm}- \frac{1+q+q^3+q^5}{1-q^7}.
\end{align*}
Thus, we need
\begin{align*}
&\frac{1}{1-q} \left(\frac{1}{1-q}+2+q+q^2+q^3+q^4+q^6- \frac{2+q+q^2}{1-q^3}  - 1-q-2q^2-q^3-q^4  \right.\\
&\left.  -q^5-q^6-2q^7-q^9-q^{11} + \frac{1+q+2q^2+q^3+q^4}{1-q^5} +1+q+q^3+q^5+q^7+q^8 \right.\\
&\hspace{7.4 cm}\left. +q^{10}+q^{12}+q^{14}- \frac{1+q+q^3+q^5}{1-q^7}\right)\\
&= \frac{1}{1-q} \left(\frac{1}{1-q} + 2+q-q^2+q^3-q^7+q^8-q^9+q^{10}-q^{11}+q^{12}+q^{14} \right.\\
&\hspace{2.85cm}\left. -  \frac{2+q+q^2}{1-q^3}  +\frac{1+q+2q^2+q^3+q^4}{1-q^5}  - \frac{1+q+q^3+q^5}{1-q^7} \right) \succeq 0.
\end{align*}
Now we write 
\begin{align*}
\frac{1+q+2q^2+q^3+q^4}{1-q^5} &= \frac{1}{1-q} + \frac{q^2}{1-q^5}, \quad
-\frac{2+q+q^2}{1-q^3} = -\frac{1}{1-q^3} - \frac{1}{1-q},\\
- \frac{1+q+q^3+q^5}{1-q^7} &= - \frac{1}{1-q} + \frac{q^2+q^4+q^6}{1+q^7}.
\end{align*}
So we want
\begin{align}\label{PO}
&\hspace{-.2cm}\frac{1}{1-q}\Biggl(2+q-q^2+q^3-q^7+q^8-q^9+q^{10}-q^{11}+q^{12}+q^{14}- \frac{1}{1-q^3} \nonumber\\
&\hspace{6.4cm}+ \frac{q^2}{1-q^5}+\frac{q^2\left(1+q^2+q^4\right)}{1-q^7}\Biggr) \succeq 0.
\end{align}
To show this, first note that
\begin{align}\label{PO1}
&\frac{1}{1-q}\!\left(- \frac{1}{1-q^3}\!+\! \frac{q^2}{1-q^5}\!+\!\frac{q^2\left(1+q^2+q^4\right)}{1-q^7}\right)\nonumber\\
&\hspace{5.2 cm}\!\succeq\! \frac{1}{1-q}\!\left(\!- \frac{1}{1-q^3}\!+\!\frac{q^2}{1-q^5}\!+\!\frac{q^2\left(1+q^2\right)}{1-q^7}\right).
\end{align}
Now we prove that
\begin{equation}\label{P0}
\frac{1}{1-q}\!\left(- \frac{1}{1-q^3} + \frac{q^2}{1-q^5}+\frac{q^2\left(1+q^2\right)}{1-q^7}\right)\succeq -\frac{1+q}{1-q}.
\end{equation} 
This is equivalent to showing that
\begin{equation}\label{PO2}
\frac{1}{1-q}\!\left(1+q- \frac{1}{1-q^3} + \frac{q^2}{1-q^5}+\frac{q^2\left(1+q^2\right)}{1-q^7}\right)\succeq 0.
\end{equation}
We have
\begin{align*}
&\hspace{-1 cm}\frac{1}{1-q}\!\left(1+q- \frac{1}{1-q^3}+\frac{q^2}{1-q^5}+\frac{q^2\left(1+q^2\right)}{1-q^7}\right)\\
&=\frac{1}{1-q}\!\left(1+q- \frac{1-q^2}{\left(1-q^3\right)\left(1-q^5\right)}+\frac{q^2\left(1+q^2\right)}{1-q^7}\right)\\
&=\frac{1+q}{1-q}-\frac{1+q}{\left(1-q^3\right)\left(1-q^5\right)}+\frac{q^2+q^4}{\left(1-q\right)\left(1-q^7\right)}\\
&=\frac{1+q}{1-q}-\frac{1+q}{\left(1-q^3\right)\left(1-q^5\right)}+\frac{q^2-q^4+2q^4}{\left(1-q\right)\left(1-q^7\right)}\\
&=\frac{1+q}{1-q}-\frac{1+q}{\left(1-q^3\right)\left(1-q^5\right)}+\frac{q^2\left(1+q\right)}{1-q^7}+\frac{2q^4}{\left(1-q\right)\left(1-q^7\right)}.
\end{align*}
Since $\frac{q^2\left(1+q\right)}{1-q^7}\succeq 0$, therefore, to prove \eqref{PO2}, it is enough to show that
\begin{equation}\label{PO3}
\frac{1+q}{1-q}-\frac{1+q}{\left(1-q^3\right)\left(1-q^5\right)}+\frac{2q^4}{\left(1-q\right)\left(1-q^7\right)}\succeq 0.
\end{equation}
Simplifying the left-hand side of \eqref{PO3}, we get
\begin{align*}
&\frac{1+q}{1-q}\!-\!\frac{1+q}{\left(1-q^3\right)\left(1-q^5\right)}\!+\!\frac{2q^4}{\left(1-q\right)\left(1-q^7\right)}\\
%&=\frac{\left(1+q\right)\left(1+q+q^2\right)}{1-q^3}-\frac{1+q}{\left(1-q^3\right)\left(1-q^5\right)}+\frac{2q^4}{\left(1-q\right)\left(1-q^7\right)}\\
%&=\frac{1+q}{1-q^3}\left(1+q+q^2-\frac{1}{1-q^5}\right)+\frac{2q^4}{\left(1-q\right)\left(1-q^7\right)}\\
%&=\frac{1+q}{1-q^3}\frac{\left(1+q+q^2\right)\left(1-q^5\right)-1}{1-q^5}+\frac{2q^4}{\left(1-q\right)\left(1-q^7\right)}\\
&\hspace{2.5 cm}=\!\frac{\left(1+q\right)\left(q+q^2-q^5-q^6-q^7\right)}{\left(1-q^3\right)\left(1-q^5\right)}\!+\!\frac{2q^4}{\left(1-q\right)\left(1-q^7\right)}\\
&\hspace{2.5 cm}= \frac{\left(1+q\right)\left(q\left(1-q^6\right)+q^2\left(1-q^3\right)-q^6\right)}{\left(1-q^3\right)\left(1-q^5\right)}+\frac{2q^4}{\left(1-q\right)\left(1-q^7\right)}\\
%&=\underset{\succeq 0}{\underbrace{\frac{q\left(1+q\right)\left(1+q^3\right)}{1-q^5}+\frac{q^2\left(1+q\right)}{1-q^5}}}-\frac{q^6+q^7}{\left(1-q^3\right)\left(1-q^5\right)}+\frac{2q^4}{\left(1-q\right)\left(1-q^7\right)}\\
&\hspace{2.5 cm}\succeq q^4\left(-\frac{q^2+q^3}{\left(1-q^3\right)\left(1-q^5\right)}+\frac{2}{\left(1-q\right)\left(1-q^7\right)}\right)\\%=q^4\left(-\frac{q^2+q^3}{\left(1-q^3\right)\left(1-q^5\right)}+\frac{2}{\left(1-q^3\right)\left(1-q^7\right)}\right)\\
&\hspace{2.5 cm}=q^4\Bigg(\frac{2q}{\left(1-q^3\right)\left(1-q^7\right)}+\frac{2\left(1+q^2\right)}{\left(1-q^3\right)\left(1-q^7\right)}-\frac{q^2+q^3}{\left(1-q^3\right)\left(1-q^5\right)}\Bigg).
\end{align*}
Since $\frac{2q}{\left(1-q^3\right)\left(1-q^7\right)}\succeq 0$, thus, to prove \eqref{PO3}, it suffices to show that 
\begin{equation*}
\frac{2\left(1+q^2\right)}{\left(1-q^3\right)\left(1-q^7\right)}-\frac{q^2+q^3}{\left(1-q^3\right)\left(1-q^5\right)}\succeq 0.
\end{equation*}
To prove this, we show that
\begin{align}\label{PO4}
\frac{1+q^2}{\left(1-q^3\right)\left(1-q^7\right)}-\frac{q^2}{\left(1-q^3\right)\left(1-q^5\right)}\succeq 0,\\\label{PO5}
\frac{1+q^2}{\left(1-q^3\right)\left(1-q^7\right)}-\frac{q^3}{\left(1-q^3\right)\left(1-q^5\right)}\succeq 0.
\end{align}
First, we see that
\begin{align*}
\frac{1+q^2}{\left(1-q^3\right)\left(1-q^7\right)}-\frac{q^2}{\left(1-q^3\right)\left(1-q^5\right)}%=\frac{1}{1-q^3}\left(\frac{1+q^2}{1-q^7}-\frac{q^2}{1-q^5}\right)\\
%&=\frac{1}{1-q^3}\left(\frac{1-q^7+q^2\left(1+q^5\right)}{1-q^7}-\frac{q^2}{1-q^5}\right)=\frac{1}{1-q^3}\left(1+\frac{q^2\left(1+q^5\right)}{1-q^7}-\frac{q^2}{1-q^5}\right)\\
%&=\frac{1}{1-q^3}\left(1+\frac{q^2\left(1-q^{10}\right)-q^2\left(1-q^7\right)}{\left(1-q^5\right)\left(1-q^7\right)}\right)=\frac{1}{1-q^3}\left(1+\frac{q^9\left(1-q^3\right)}{\left(1-q^5\right)\left(1-q^7\right)}\right)\\
%&
=\frac{1}{1-q^3}+\frac{q^9}{\left(1-q^5\right)\left(1-q^7\right)}\succeq 0,
\end{align*}
which proves \eqref{PO4}. 

Next, we have
\begin{align*}
\frac{1+q^2}{\left(1-q^3\right)\left(1-q^7\right)}-\frac{q^3}{\left(1-q^3\right)\left(1-q^5\right)}%=\frac{1}{1-q^3}\left(\frac{1+q^2}{1-q^7}-\frac{q^3}{1-q^5}\right)\\
%&=\frac{1}{1-q^3}\frac{1+q^2-q^5-q^7-q^3+q^{10}}{\left(1-q^5\right)\left(1-q^7\right)}=\frac{1}{1-q^3}\frac{1-q^3+q^2\left(1-q^3\right)-q^7\left(1-q^3\right)}{\left(1-q^5\right)\left(1-q^7\right)}\\
%&=\frac{1}{\left(1-q^5\right)\left(1-q^7\right)}+\frac{q^2-q^7}{\left(1-q^5\right)\left(1-q^7\right)}
=\frac{1}{\left(1-q^5\right)\left(1-q^7\right)}+\frac{q^2}{1-q^7}\succeq 0.
\end{align*}
This gives \eqref{PO5}. This concludes the proof of \eqref{P0}. 

Thus, combining \eqref{PO1} and \eqref{P0}, it follows that
\begin{equation*}
\frac{1}{1-q}\!\left(- \frac{1}{1-q^3}\!+\! \frac{q^2}{1-q^5}\!+\!\frac{q^2\left(1+q^2+q^4\right)}{1-q^7}\right)\!\succeq -\frac{1+q}{1-q}.
\end{equation*}
Thus, to prove \eqref{PO}, it remains to show that
\[
\frac{1}{1-q}\left(2+q-q^2+q^3-q^7+q^8-q^9+q^{10}-q^{11}+q^{12}+q^{14}-\left(1+q\right)\right)\succeq 0,
\]
which holds. This gives the claim for $k=6$.

Finally, we consider the case $k=7$. From Lemma \ref{newlem1}, we have
\begin{equation*}
F_{7,1}(q) = \frac{q}{\left(1-q\right)\left(q;q^2\right)_6} \left(1+(1-q) \sum_{n=1}^{5} \frac{\left(q^{-10};q^2\right)_n}{\left(q^2;q^2\right)_n} \frac{q^{13n}}{1-q^{2n+1}}\right).
\end{equation*}
Simplifying the factors $\frac{(q^{-10};q^2)_n}{(q^2;q^2)_n}$ (for $1\le n\le 5$) in the sum on the right hand side of the above identity, we have 
\begin{align*}
F_{7,1} (q) &= \frac{q}{\left(q;q^2\right)_6} \left(\frac{1}{1-q}-\frac{q^3\left(1+q^2+q^4+q^6+q^8\right)}{1-q^3}\right.\\ &\hspace{-1 cm}\quad \, \left. + \frac{q^8 \left(1+q^4\right)\left(1+q^2+q^4+q^6+q^8\right)}{1-q^5}-\frac{q^{15}\left(1+q^4\right)\left(1+ q^2+q^4+q^6+q^8\right)}{1-q^7} \right.\\
&\hspace{4.7 cm}\quad \, \left. + \frac{q^{20} \left(1+q^2+q^4+q^6+q^8\right)}{1-q^9} - \frac{q^{35}}{1-q^{11}}\right). 
\end{align*}

Similar to the case $k=6$, after carrying long divisions, to prove the claim, we need to show that
%Thus, we need
\begin{align*}
&\frac{1}{\left(q;q^2\right)_6} \Biggl(2 - q + 2q^2 - q^4 + q^5 - q^6 - q^{10} - 2 q^{11} + 2 q^{12} + 2 q^{14} - 2 q^{15}\\
&\hspace{3 cm}  + 2 q^{16} - q^{17}+ q^{18} - q^{19} + q^{20}+q^{24}+\frac{1}{1-q}-\frac{2+q+2 q^2}{1-q^3}\\
&\hspace{3 cm} +\frac{2 + 2 q + 2 q^2 + 2 q^3 + 2 q^4}{1-q^5}-\frac{2 +q +2 q^2 +q^3 +q^4 +2 q^5 + q^6}{1-q^7}\\
&\hspace{6.5 cm} +\frac{q + q^2 + q^4 + q^6 + q^8}{1-q^9}-\frac{q^{2}}{1-q^{11}} \Biggr)\succeq 0.
\end{align*}
Now we write
\begin{align*}
-\frac{2+q+2q^2}{1-q^3} &=- \frac{1}{1-q} -\frac{1+q^2}{1-q^3},\\
\frac{2+2q+2q^2+2q^3+2q^4}{1-q^5} &= \frac{2}{1-q},\\
- \frac{2+q+2q^2+q^3+q^4+2q^5+q^6}{1-q^7} &= - \frac{1}{1-q} - \frac{1+q^2+q^5}{1-q^7},\\
\frac{q + q^2 + q^4 + q^6 + q^8}{1-q^9}&=\frac{1}{1-q}-\frac{1+q^3+q^5+q^7}{1-q^9}.
\end{align*}
Thus, note that the claim follows if we show 
\begin{align*}%\label{eqn7a}
&\frac{1}{1-q} \Biggl(2 - q + 2q^2 - q^4 + q^5 - q^6 - q^{10} - 2 q^{11} + 2 q^{12} + 2 q^{14} - 2 q^{15} + 2 q^{16}\nonumber\\
&\hspace{0.5 cm}-\!q^{17}\!+\!q^{18}\!-\!q^{19}\!-\!\frac{1\!+\! q^2}{1\!-\!q^3}\!+\!\frac{2}{1\!-\!q}\!-\!\frac{1\!+\! q^2\!+\!q^5}{1\!-\!q^7}\!-\!\frac{1\!+\!q^3\!+\!q^5\!+\!q^7}{1\!-\!q^9}\!-\!\frac{q^{2}}{1\!-\!q^{11}} \Biggr)\succeq 0.
\end{align*}
Next, we simplify
\begin{align*}
-\frac{1+ q^2}{1-q^3}+\frac{2}{1-q}-\frac{1+q^3 + q^5 + q^7}{1-q^9}
&=\frac{2 q + q^2 + 2 q^4 + q^6 + q^7 + q^8}{1-q^9}.
\end{align*}
Thus, we need to show that
\begin{align}\label{eqn7b}
&\frac{1}{1-q} \Biggl(2 - q + 2q^2 - q^4 + q^5 - q^6 - q^{10} - 2 q^{11} + 2 q^{12} + 2 q^{14} - 2 q^{15} + 2 q^{16} \nonumber\\
&\hspace{0 cm} \!-\!q^{17}\!+\! q^{18}\!-\!q^{19}\!+\!\frac{2 q\!+\!q^2\!+\!2 q^4\!+\! q^6\!+\! q^7\!+\! q^8}{1\!-\!q^9}%\right.\nonumber\\
%&\hspace{4 cm}\left.
\!-\!\frac{1\!+\!q^2\!+\!q^5}{1\!-\!q^7}\!-\!\frac{q^{2}}{1\!-\!q^{11}} \Biggr)\succeq 0.
\end{align}
Note that
\begin{align*}
&\frac{2\!-\!q\!+\!2q^2\!-\!q^4\!+\!q^5\!-\!q^6\!-\!q^{10}\!-\!2 q^{11}\!+\!2 q^{12}\!+\!2 q^{14}\!-\!2 q^{15}\!+\!2 q^{16}\!-\!q^{17}\!+\!q^{18}\!-\!q^{19}}{1-q}\\
%&=\frac{1+q^2-q^{10}-2q^{11}+2q^{12}+q^{16}}{1-q}+1+q^2+q^3+q^5+2q^{14}+q^{16}+q^{18}\\
&=\frac{1-2q^{11}}{1-q}+\frac{q^2\left(1-q^8\right)}{1-q}+\frac{2q^{12}+q^{16}}{1-q}+1+q^2+q^3+q^5+2q^{14}+q^{16}+q^{18}.
\end{align*}
Since
\[
\frac{q^2\left(1-q^8\right)}{1-q}+\frac{2q^{12}+q^{16}}{1-q}+1+q^2+q^3+q^5+2q^{14}+q^{16}+q^{18}\succeq 0,
\]
to prove \eqref{eqn7b}, it suffices to show
\begin{align*}%\label{eqn7c}
&\frac{1}{1-q} \left(1-2q^{11} +\frac{2 q + q^2 + 2 q^4+q^6}{1-q^9}-\frac{1+ q^2+q^5}{1-q^7}-\frac{q^{2}}{1-q^{11}} \right)\succeq 0.
\end{align*}
To prove this, we prove the following
\begin{align}\label{eqn7d}
\frac{1}{1-q} \left(1-2q^{11} +\frac{q + q^2 + 2 q^4+q^6}{1-q^9}-\frac{1+ q^2+q^5}{1-q^7} \right)&\succeq 0,\\\label{eqn7e}
\frac{1}{1-q} \left(\frac{q}{1-q^9}-\frac{q^{2}}{1-q^{11}} \right)&\succeq 0.
\end{align}
Note that
\[
\frac{1}{1-q}\!\left(\frac{q}{1-q^9}\!-\!\frac{q^{2}}{1-q^{11}} \right)\!=\!\frac{q\!-\!q^{12}\!-\!q^2\!+\!q^{11}}{\left(1\!-\!q\right)\left(1\!-\!q^9\right)\left(1\!-\!q^{11}\right)}\!=\!\frac{q\!+\!q^{11}}{\left(1\!-\!q^9\right)\left(1\!-\!q^{11}\right)}\succeq 0,
\]
which proves \eqref{eqn7e}. 
Next we prove \eqref{eqn7d}. First we see that 
\begin{align*}
&\frac{1}{1-q} \left(1-2q^{11} +\frac{q + q^2 + 2 q^4+q^6}{1-q^9}-\frac{1+ q^2+q^5}{1-q^7} \right)\\
%&=1+\underset{\succeq 0}{\underbrace{\sum_{j=1}^{10}q^j}}+\frac{1}{1-q}\left(-q^{11} +\frac{q + q^2 + 2 q^4+q^6}{1-q^9}-\frac{1+ q^2+q^5}{1-q^7}\right)\\
&\hspace{1 cm}\succeq 1+\frac{1}{1-q}\left(\frac{q + q^2 + 2 q^4+q^6-q^{11}+q^{20}}{1-q^9}-\frac{1+ q^2+q^5}{1-q^7}\right)\\
%&=1+\frac{1}{1-q}\left(q^2+\frac{q + 2 q^4+q^6+q^{20}}{1-q^9}-\frac{1+ q^2+q^5}{1-q^7}\right)\\
&\hspace{1 cm}=1+\frac{1}{1-q}\left(\frac{q + 2 q^4+q^6+q^{20}}{1-q^9}-\frac{1 + q^5 + q^{9}}{1-q^7}\right)\\
&\hspace{1 cm}\succeq 1+\frac{1}{1-q}\left(\frac{q + 2 q^4+q^6}{1-q^9}-\frac{1 + q^5 + q^{9}}{1-q^7}\right)\\
%&=\frac{1}{1-q}\left(1-q+\frac{q + 2 q^4+q^6}{1-q^9}-\frac{1 + q^5 + q^{9}}{1-q^7}\right)\\
%&=\frac{1}{1-q}\left(\frac{1 + 2 q^4+q^6-q^9+q^{10}}{1-q^9}-\frac{1 + q^5 + q^{9}}{1-q^7}\right)\\
%&=\frac{1}{1-q}\left(\frac{q^4\left(1-q^5\right)}{1-q^9}+\frac{1 + q^4+q^6+q^{10}}{1-q^9}-\frac{1 + q^5 + q^{9}}{1-q^7}\right)\\
&\hspace{1 cm}=\frac{q^4\left(1+q+q^2+q^3+q^4\right)}{1-q^9}+\frac{1}{1-q}\left(\frac{1 + q^4+q^6+q^{10}}{1-q^9}-\frac{1 + q^5 + q^{9}}{1-q^7}\right).
\end{align*}
Since
\[
\frac{q^4\left(1+q+q^2+q^3+q^4\right)}{1-q^9}\succeq 0,
\]
to prove \eqref{eqn7d}, it is enough to show that
\begin{equation*}%\label{eqn7f}
\frac{1}{1-q}\left(\frac{1 + q^4+q^6+q^{10}}{1-q^9}-\frac{1 + q^5 + q^{9}}{1-q^7}\right)\succeq 0.
\end{equation*}

%{\bf \color{blue} K: up to this, new part added. A proof of \eqref{eqn7f} will conclude the proof for the case $k=7$.}

We write
\[
\frac{q^{10}}{1-q^9}=-q+\frac{q}{1-q^9},\quad \frac{q^{9}}{1-q^7}=-q^2+\frac{q^2}{1-q^7}.
\]
Thus we need to show that
\[
\frac{1}{1-q}\left(-q+q^2+\frac{1 + q+q^4+q^6}{1-q^9}-\frac{1 + q^2+q^5}{1-q^7}\right)\succeq 0.
\]
We combine 
\begin{align*}
\frac{1\!+\!q\!+\!q^4\!+\!q^6}{1\!-\!q^9}\!-\!\frac{1\!+\! q^2\!+\!q^5}{1\!-\!q^7}%&\!=\!\frac{q\!-\!q^2\!+\!q^4\!-\!q^5\!+\!q^6\!-\!q^7\!-\!q^8\!+\!q^9\!-\!q^{13}\!+\!q^{14}}{\left(1\!-\!q^7\right)\left(1\!-\!q^9\right)}\\
&=\frac{\left(1\!-\!q\right) q \left(1\!+\!q^3\!+\!q^5\!-\!q^7\!-\!q^{12}\right)}{\left(1\!-\!q^7\right)\left(1\!-\!q^9\right)}.
\end{align*}
So we need
\[
-1+\frac{1 + q^3 + q^5 - q^7 - q^{12}}{\left(1-q^7\right)\left(1-q^9\right)}\succeq 0.
\]
It is not hard to see that the constant term of the left-hand side vanishes. Thus it is enough to show that
\[
\frac{1 + q^3  - q^7 - q^{12}}{\left(1-q^7\right)\left(1-q^9\right)}=\frac{1}{1-q^9}+\frac{q^3}{1-q^7}\succeq 0
\]
which holds. thus concludes the proof for $k=7$. \qed 

\phantom{.}

 Next, we prove \Cref{thm2}.

\noindent \emph{Proof of Theorem \ref{thm2}}.  Throughout the proof, $k\!\in\!\{8,9,10\}$. We begin by establishing an alternative form for $F_{k,1}(q)$.  To make the powers in the inner sum of  \Cref{newlem1} more explicitly positive, we distribute powers of $q$. For this, we write 
\begin{align*}
	\left(q^{4-2k};q^2\right)_n q^{(2k-1)n} %&= \left(1-q^{4-2k}\right)\left(1-q^{4-2k+2}\right) \cdot\ldots\cdot\left(1-q^{4-2k+2n-2}\right) q^{(2k-1)n} \\
	%&\hspace{-2.8cm}= \left(q^{2k-4} - 1\right)\left(q^{2k-6} - 1\right) \cdot\ldots\cdot \left(q^{2k - 2n - 2} - 1\right) q^{(2k-1)n-2\sum_{j=0}^{n-1}(k-2-j)} \\
	%&\hspace{-2.8cm}= (-1)^n \left(1-q^{2k-4}\right) \left(1-q^{2k-6}\right) \cdot\ldots\cdot \left(1-q^{2k - 2n-2}\right) q^{(2k-1)n-2(k-2)n+n(n-1)} \\
= (-1)^n \left(q^{2k-2n-2};q^2\right)_n q^{n^2+2n}.
\end{align*}
Plugging this into \Cref{newlem1}, we obtain 
\begin{align*}
F_{k,1}(q) &= \frac{q}{(1-q)(q;q^2)_{k-1}} \left( 1 + (1-q) \sum_{n=1}^{k-2} \frac{(-1)^n \left(q^{2k-2n-2};q^2\right)_n q^{n^2+2n}}{(1-q^{2n+1})(q^2;q^2)_n} \right)\\
	&=\frac{1}{(q;q^2)_{k-1}}\!\sum_{n=0}^{k-2}\!\frac{(-1)^n \left(q^{2k-2n-2};q^2\right)_n q^{(n+1)^2}}{(1-q^{2n+1})(q^2;q^2)_n}\\
	&=\frac{1}{(q;q^2)_{k-1}}\!\sum_{n=0}^{k-2}\!\begin{bmatrix} k-2 \\ n \end{bmatrix}_{q^2}\!\!\! \frac{(-1)^n q^{(n+1)^2}}{1\!-\!q^{2n+1}}.
\end{align*}
Note that we can place all terms in the inner sum over the common denominator $1-q^\ell$, where $\ell := \text{lcm}(1,3,5,\dots,2k-3)$. Thus we obtain
\begin{equation*}%\label{commdenom}
F_{k,1}(q) = \frac{1}{(q;q^2)_{k-1}(1-q^\ell)} \sum_{n=0}^{k-2} \begin{bmatrix} k-2 \\ n \end{bmatrix}_{q^2}  (-1)^n q^{(n+1)^2}\sum_{j=0}^{\frac{\ell}{2n+1}-1}q^{\left(2n+1\right)j}. %q^{(2n+1)\sum_{j=0}^{\frac{\ell}{2n+1}-1} j}.
\end{equation*}
Now $F_{k,1}(q)\succeq0$ holds if 
\begin{equation}\label{neweqn}
\frac{1}{1-q} \sum_{n=0}^{k-2} \begin{bmatrix} k-2 \\ n \end{bmatrix}_{q^2}  (-1)^n q^{(n+1)^2} \sum_{j=0}^{\frac{\ell}{2n+1}-1}q^{\left(2n+1\right)j} \succeq 0.
\end{equation}
Now consider the inner sum, which is the polynomial  
\begin{equation*}%\label{inner sum of Fk1}
	H_k(q) := \sum_{n=0}^{k-2} \begin{bmatrix} k-2 \\ n \end{bmatrix}_{q^2}  (-1)^n q^{(n+1)^2} \sum_{j=0}^{\frac{\ell}{2n+1}-1}q^{\left(2n+1\right)j}=:\sum_{j=0}^{d_k}a_k(j)q^j, %q^{(2n+1)\sum_{j=0}^{\frac{\ell}{2n+1}-1} j} 
\end{equation*}
where $d_k$ is the degree of $H_k(q)$, and $a_k(j):=\text{coeff}_{[q^j]}(H_k(q))$. 

%{\bf \color{blue} K: Deleted all our discussions. added new part below.} %{\bf \color{olive} WJK: Minor grammar correction in the following line.}

%{\kbf To carefully check rest.}
For a polynomial $\mathcal{P}(q)=\sum_{j=0}^d a(j)q^j$ of degree $d$ ($d\in \mathbb{N}$) with $a(j)\in \mathbb{R}$ and $a(j)=0$ for $j>d$, we write %the series $\mathcal{P}(q)/(1-q)$ can be written 
\begin{align*}
\frac{\mathcal{P}(q)}{1-q}=\sum_{m\ge 0}\sum_{j=0}^d a(j)q^{j+m}=\sum_{n\ge 0} c(n) q^n,
\end{align*}
using $j+m=n$ and $a(n)>0$ for $n>d$ in the final step, where
\begin{equation*}
c(n):=\begin{cases}
\sum_{j=0}^{n} a(j)\ \ \text{if}\ \ 0\le n\le d,\\
\sum_{j=0}^{d} a(j)\ \ \text{if}\ \ n>d.
\end{cases}
\end{equation*}
Therefore, to prove $\frac{\mathcal{P}(q)}{1-q}\succeq 0$, it suffices to check that $c(n)\ge 0$ for $0\le n\le d$. Using this with $\mathcal{P}(q)= H_k(q)$, we see that to show \eqref{neweqn}, it is enough to check that $\sum_{j=0}^{m}a_k(j)\ge 0$ for all $0 \leq m \leq d_k$. We have checked this with a computer and the code\footnote{We provide the code for $k=10$. The code for $k\!\in\!\{8,9\}$ is similar.} is provided in Appendix A.\qed

%{\bf \color{blue} K: deleted the rest of the discussion which is not required anymore.}

%{\bf \color{blue} K: In the previous version, the proof of Theorem \ref{lem2} and Theorem \ref{lem3} were placed in \Cref{idents} but since Theorems \ref{lem2} and \ref{lem3} are our main results, I placed it here.}

Next, we prove Theorem \ref{lem2}. 

\noindent \emph{Proof of Theorem \ref{lem2}}.  By Lemma \ref{lem1}, it follows that
\begin{equation}\label{eqn5}
\lim_{k\to\infty} F_{k,1}(q)= q \sum_{n\ge 0}\frac{q^{n}}{\left(q;q^2\right)_{n+1}}.
\end{equation}
Applying Lemma \ref{RF} with $q\mapsto q^2$, $\alpha=0$, $\b=q^3$, and $w=q$, and then multiplying both sides by $\frac{q}{1-q}$ and finally using \eqref{Ramanujanomega}, we obtain
\begin{equation}\label{Omegareps}
q\sum_{n\ge 0}\frac{q^n}{\left(q;q^2\right)_{n+1}}
=q\omega(q).
\end{equation}
Plugging this into \eqref{eqn5}, we conclude the proof. \qed

We are now ready to prove Theorem \ref{lem3}.

%\begin{proof}
\noindent \emph{Proof of Theorem \ref{lem3}}. By \eqref{Omegareps} and Lemma \ref{lem1}, we have
\begin{align}\label{eqn6}
q\omega(q)-F_{k,1}(q)&=\sum_{n\ge 0}\frac{q^{n+1}}{\left(q;q^2\right)_{n+1}}-\sum_{n\ge 0}\frac{\left(q^{2k-1};q^2\right)_nq^{n+1}}{\left(q;q^2\right)_{n+1}}\nonumber\\
&=\sum_{n\ge 0}\frac{1-\left(q^{2k-1};q^2\right)_n}{\left(q;q^2\right)_{n+1}}q^{n+1}.
%&=\left(1-\right)
\end{align} 
Applying \eqref{Gaussian} with $q\mapsto q^2, a=q^{2k-1}$, and $N=n$, it follows that
\[
\left(q^{2k-1};q^2\right)_n=\sum_{j=0}^{n}\begin{bmatrix}
n\\
j
\end{bmatrix}_{q^2}(-1)^j q^{j^2+2kj-2j}.
\]
Consequently, we have
\[
1-\left(q^{2k-1};q^2\right)_n=1-\sum_{j=0}^{n}\begin{bmatrix}
n\\
j
\end{bmatrix}_{q^2}(-1)^j q^{j^2+2kj-2j}=-\sum_{j=1}^{n}\begin{bmatrix}
n\\
j
\end{bmatrix}_{q^2}(-1)^j q^{j^2+2kj-2j}.
\]
Plugging this into \eqref{eqn6}, we get
\begin{align*}
&q\omega(q)-F_{k,1}(q)=\sum_{n\ge 0}\frac{q^{n+1}}{\left(q;q^2\right)_{n+1}}\sum_{j=1}^{n}\begin{bmatrix}
n\\
j
\end{bmatrix}_{q^2}(-1)^{j+1} q^{j^2+2kj-2j}\nonumber\\
%\intertext{\bf \color{blue} K: The following two doesn't fit into one line, going beyond the margin.}
&=\sum_{n\ge 0}\frac{q^{n}}{\left(q;q^2\right)_{n+1}}\sum_{j=1}^{n}\begin{bmatrix}
n\\
j
\end{bmatrix}_{q^2}(-1)^{j+1} q^{\left(j-1\right)^2+2kj}\nonumber\\
%&\os{j\mapsto j+1}=\sum_{n\ge 0}\frac{q^{n}}{\left(q;q^2\right)_{n+1}}\sum_{j=0}^{n-1}\begin{bmatrix}
%n\\
%j+1
%\end{bmatrix}_{q^2}(-1)^{j} q^{j^2+2kj+2k}\nonumber\\
&
=\sum_{n\ge 1}\frac{q^{n}}{\left(q;q^2\right)_{n+1}}\sum_{j=0}^{n-1}\begin{bmatrix}
n\\
j+1
\end{bmatrix}_{q^2}(-1)^{j} q^{j^2+2kj+2k}\nonumber\\
&=q^{2k}\left(\frac{q}{\left(1-q\right)\left(1-q^3\right)}+\sum_{n\ge 2}\frac{q^{n}}{\left(q;q^2\right)_{n+1}}\sum_{j=0}^{n-1}\begin{bmatrix}
n\\
j+1
\end{bmatrix}_{q^2}(-1)^{j} q^{j^2+2kj}\right)\nonumber\\
&=q^{2k+1}\left(\frac{1}{\left(1-q\right)\left(1-q^3\right)}+\sum_{n\ge 2}\frac{q^{n-1}}{\left(q;q^2\right)_{n+1}}\sum_{j=0}^{n-1}\begin{bmatrix}
n\\
j+1
\end{bmatrix}_{q^2}(-1)^{j} q^{j^2+2kj}\right)\nonumber\\
%\intertext{\bf KB: Writing over $=$ is not good style for a paper. \color{blue} K: deleted it as it is in the details file.}
%&\hspace{-0.3 cm}\os{n\mapsto n+1}=q^{2k+1}\left(\frac{1}{\left(1-q\right)\left(1-q^3\right)}+\sum_{n\ge 1}\frac{q^{n}}{\left(q;q^2\right)_{n+2}}\sum_{j=0}^{n}\begin{bmatrix}
%n+1\\
%j+1
%\end{bmatrix}_{q^2}(-1)^{j} q^{j^2+2kj}\right)\nonumber\\
&=q^{2k+1}\left(\frac{1}{(1-q)(1-q^3)}\right.\nonumber\\
&\hspace{3.4cm}\left.+\frac{1}{(1-q)(1-q^3)}\sum_{n\ge 1}\frac{q^{n}}{(q^5;q^2)_{n}}\sum_{j=0}^{n}\begin{bmatrix}
n+1\\
j+1
\end{bmatrix}_{q^2}(-1)^{j} q^{j^2+2kj}\right)\nonumber\\
&=\frac{q^{2k+1}}{\left(1-q\right)\left(1-q^3\right)}\left(1+\sum_{n\ge 1}\frac{q^{n}}{\left(q^5;q^2\right)_{n}}\sum_{j=0}^{n}\begin{bmatrix}
n+1\\
j+1
\end{bmatrix}_{q^2}(-1)^{j} q^{j^2+2kj}\right)\nonumber\\
&=q^{2k+1}E_k(q),
\end{align*}
where
\[
E_k(q):=\frac{1}{\left(1-q\right)\left(1-q^3\right)}\left(1+\sum_{n\ge 1}\frac{q^{n}}{\left(q^5;q^2\right)_{n}}\sum_{j=0}^{n}\begin{bmatrix}
n+1\\
j+1
\end{bmatrix}_{q^2}(-1)^{j} q^{j^2+2kj}\right).
\]

Next note that
$
E_k(q)\in \mathbb{Z}[[q]],
$
because 
\[
\frac{1}{\left(1-q\right)\left(1-q^3\right)}\in \mathbb{Z}[[q]]\ \ \text{and}\ \ \begin{bmatrix}
n+1\\
j+1
\end{bmatrix}_{q^2}\in\mathbb{Z}[q].
\]
Moreover, we have
\[
\operatorname{coeff}_{\left[q^0\right]}\left(\frac{1}{\left(1-q\right)\left(1-q^3\right)}\right)=1, 
\]
and
\[
\operatorname{coeff}_{\left[q^0\right]}\left(\sum_{n\ge 1}\frac{q^{n}}{\left(q^5;q^2\right)_{n}}\sum_{j=0}^{n}\begin{bmatrix}
n+1\\
j+1
\end{bmatrix}_{q^2}(-1)^{j} q^{j^2+2kj}\right)=0,
\]
because for $n,k\in\mathbb{N}$, $0\le j\le n$, $j^2+2kj+n\neq 0$. Consequently,
\[
\operatorname{coeff}_{\left[q^0\right]}E_k(q)=1.
\]
This concludes the proof.\qed

%\section{Connections with $q$-binomial quotients}\label{idents}

%{\bf \color{blue} K: I think we do not need this new section because the identity written in \Cref{qbinquot} is a trivial rewriting and all our speculations regarding $G_{k,n}(q)$ are placed in Section \ref{conc}. What do you think? \color{black} KB: I agree.} {\bf \color{olive} WJK: Okay.}

%By filling in missing factors, it is not difficult to see the following identity of the summands $G_{k,n}$ as quotients of $q$-binomials.

%\begin{lemma}\label{qbinquot}
%We have
%\[G_{k,n}(q) = \frac{1}{1-q^{2n+1}} \frac{\begin{bmatrix} 2k+2n-2 \\ 2n \end{bmatrix}_{q}}{\begin{bmatrix} k+n-1 \\ n \end{bmatrix}_{q^2}}.\]
%\end{lemma}

\section{Potential extensions of current approaches}\label{conc}

Each of the approaches employed in this paper runs into difficulties, which we now discuss, followed by some suggestions on extensions or alternatives.

%Note that, in order to show positivity of $F_{k,1}(q)$, following the representation in Lemma \ref{newlem1}, it suffices to show that $\mathcal F_k(q) \succeq 0$, where 
%\begin{equation*}
%\mathcal F_k(q) := \frac{1}{\left(q;q^2\right)_{k-1}}\left(\frac{1}{1-q} +\sum_{n=1}^{k-2} \frac{\left(q^{4-2k};q^2\right)_n}{(q^2;q^2)_n} \frac{q^{(2k-1)n}}{1-q^{2n+1}}\right).
%\end{equation*}
%Next, we rewrite $\mathcal F_k(q)$ as
%\begin{equation*}
%\mathcal F_k(q) = \frac{1}{\left(q;q^2\right)_{k-1}}\left(\frac{1}{1-q} + \sum_{n=1}^{k-2} (-1)^n \left[\begin{matrix}k-2\\n\end{matrix}\right]_{q^2} \frac{q^{n^2+2n}}{1-q^{2n+1}}\right).
%\end{equation*}
In the proof of Theorem \ref{thm1}, for $k=5$, we first simplify $[\begin{smallmatrix}k-2\\n\end{smallmatrix}]_{q^2}$, then do long division and combine. However we are unable to find a uniform process of simplification, as shown in the proof of Theorem \ref{thm1} for $k\in\{6,7\}$. If a persistent pattern could be found then the entire conjecture might be provable in this way.

In the proof of Theorem \ref{thm2}, the calculation is algorithmic but the lcm grows exponentially in $k$. For $k=11$ we are unable to continue.  A more careful choice of where to cancel might allow further calculation; sufficient systematic insight, possibly including some combinatorial injections, might even be able to fully establish the conjecture. We make the following connection with $q$-binomial quotients, which may hold independent interest for some readers.  Recall \Cref{lem1}
\begin{equation}\label{gkn}
F_{k,1}(q)=\sum_{n\ge 0}\frac{\left(q^{2k-1};q^2\right)_nq^{n+1}}{\left(q;q^2\right)_{n+1}} =: \sum_{n \geq 0} G_{k,n} (q) q^{n+1}.
\end{equation}
We make the following refined conjecture, which would in turn imply \Cref{ABconj}. Next, we rewrite $G_{k,n}(q)$ as a quotient of $q$-binomials. %{\bf \color{blue} K: proof is added in the details file.}
\begin{equation}\label{qbinquot}
G_{k,n}(q) = \frac{1}{1-q^{2n+1}} \frac{\begin{bmatrix} 2k+2n-2 \\ 2n \end{bmatrix}_{q}}{\begin{bmatrix} k+n-1 \\ n \end{bmatrix}_{q^2}}.
\end{equation}

\begin{conjecture} We have $G_{k,n}(q) \succeq 0$.
\end{conjecture}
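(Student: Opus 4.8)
\noindent\emph{Proof proposal.} I would begin by recording the equivalent closed forms that put $G_{k,n}$ into a shape already exploited in this paper. From the definition in \eqref{gkn} we have $G_{k,n}(q)=(q^{2k-1};q^2)_n/(q;q^2)_{n+1}$, and relabeling Pochhammer products (using $(q^{2k-1};q^2)_n=(q;q^2)_{k+n-1}/(q;q^2)_{k-1}$, and symmetrically in $n$) yields
\[
G_{k,n}(q)=\frac{1}{1-q^{2n+1}}\cdot\frac{\left(q^{2k-1};q^2\right)_n}{\left(q;q^2\right)_n}=\frac{\left(q^{2n+3};q^2\right)_{k-2}}{\left(q;q^2\right)_{k-1}}=\frac{1}{1-q}\cdot\frac{\left(q^{2n+3};q^2\right)_{k-2}}{\left(q^3;q^2\right)_{k-2}},
\]
which is also consistent with \eqref{qbinquot} via $(q;q)_{2m}=(q;q^2)_m(q^2;q^2)_m$. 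Expanding the numerator of the middle form by \eqref{Gaussian} with $q\mapsto q^2$, $a=q^{2n+3}$, $N=k-2$, gives the manifestly finite expression
\[
G_{k,n}(q)=\frac{1}{\left(q;q^2\right)_{k-1}}\sum_{j=0}^{k-2}\begin{bmatrix}k-2\\j\end{bmatrix}_{q^2}(-1)^j q^{j^2+2(n+1)j},
\]
which has essentially the same shape as the sums treated in the proofs of Theorem \ref{thm2} and Theorem \ref{lem3}; notably $n$ enters only through the clean shift $q^{2(n+1)j}$.

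Two complementary routes then suggest themselves. \emph{(A) Partial sums.} Clearing the denominator $(q;q^2)_{k-1}=(1-q)(q^3;q^2)_{k-2}$ against $1-q^{\ell}$ with $\ell:=\operatorname{lcm}(1,3,\dots,2k-3)$, and using that the reciprocals of $(q^3;q^2)_{k-2}$ and of $1-q^{\ell}$ are $\succeq0$, the bookkeeping device from the proof of Theorem \ref{thm2} reduces $G_{k,n}(q)\succeq0$ to checking that every partial sum of the coefficients of a single explicit polynomial is non-negative. This is a finite computation settling any prescribed $(k,n)$, but — exactly as for Theorem \ref{thm2} — the relevant degrees and $\ell$ grow, so it is not uniform in $k$ and $n$. \emph{(B) A sign-reversing involution.} Since $\begin{bmatrix}k-2\\j\end{bmatrix}_{q^2}=\sum_\mu q^{2|\mu|}$ over partitions $\mu$ fitting in a $j\times(k-2-j)$ box and $\frac1{(q;q^2)_{k-1}}=\sum_\lambda q^{|\lambda|}$ over partitions $\lambda$ into odd parts $\le 2k-3$, the finite form says that $\operatorname{coeff}_{[q^N]}G_{k,n}(q)$ equals the signed count of triples $(\mu,\lambda,j)$ with $2|\mu|+j^2+2(n+1)j+|\lambda|=N$, weighted by $(-1)^j$. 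The plan is to build an involution on these triples that flips the parity of $j$ and whose fixed points all have $j$ even; then $\operatorname{coeff}_{[q^N]}G_{k,n}(q)\ge0$, and $F_{k,1}(q)=\sum_{n\ge0}G_{k,n}(q)q^{n+1}$ from \eqref{gkn} would give Conjecture \ref{ABconj}.

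The recursions one would hope to feed an induction all fail in the same way, and that is where I expect the real difficulty. From \eqref{gkn} one checks, for $n\ge1$,
\[
G_{k,n}(q)=\frac{1-q^{2k-1}}{1-q^{2n+1}}\,G_{k+1,n-1}(q),\qquad G_{k,n}(q)=G_{k-1,n}(q)+q^{2k-3}\cdot\frac{1-q^{2n}}{1-q^{2n+1}}\,G_{k,n-1}(q),
\]
and in each identity there is a rational factor — such as $\frac{1-q^{2k-1}}{1-q^{2n+1}}$ or $\frac{1-q^{2n}}{1-q^{2n+1}}$ — which, barring a divisibility accident, has infinitely many negative coefficients. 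Hence no naive induction on $k$ or $n$ preserves $\succeq0$, and iterating only expresses $G_{k,n}$ as a sum over lattice paths whose step weights are products of such non-positive factors. In the language of Route (B), the obstruction is that the weights $q^{j^2+2(n+1)j}$ displace the windows $\begin{bmatrix}k-2\\j\end{bmatrix}_{q^2}$ by amounts depending on $n$, so the "match consecutive $j$" involution suggested by $\begin{bmatrix}k-2\\j+1\end{bmatrix}_{q^2}=q^{2(j+1)}\begin{bmatrix}k-3\\j\end{bmatrix}_{q^2}+\begin{bmatrix}k-3\\j+1\end{bmatrix}_{q^2}$ does not close up; the crux is to design an involution genuinely sensitive to $n$, which — as the authors note in Section \ref{conc} — is precisely what has resisted every approach in this paper. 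A more realistic intermediate target is to settle fixed small $k$ for all $n$: $G_{3,n}(q)=\frac{1-q^{2n+3}}{(1-q)(1-q^3)}\succeq0$ is immediate, and $k\in\{4,5\}$ look within reach of the long-division bookkeeping used for Theorem \ref{thm1}, each already yielding a new infinite family of instances of Conjecture \ref{ABconj} via \eqref{gkn}.
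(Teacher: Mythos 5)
The statement you are addressing is not proved in the paper at all: it is stated in Section~\ref{conc} as an open conjecture, and the paper's actual theorems only give the weaker, summed assertion $F_{k,1}(q)\succeq 0$ for $5\le k\le 10$ (Theorems~\ref{thm1} and~\ref{thm2}), the trivial case $G_{k,0}(q)=\frac{1}{1-q}$, and the symmetry of Lemma~\ref{gknsmallk}. Your proposal does not close this gap either, and to your credit you say so. The algebra you record is correct and useful: the closed form $G_{k,n}(q)=\frac{\left(q^{2n+3};q^2\right)_{k-2}}{\left(q;q^2\right)_{k-1}}$, its expansion via \eqref{Gaussian} with the shift $q^{j^2+2(n+1)j}$, the consistency with \eqref{qbinquot}, both recursions, and the evaluation $G_{3,n}(q)=\frac{1-q^{2n+3}}{(1-q)(1-q^3)}\succeq 0$ all check out. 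But neither of your two routes constitutes a proof. Route (A) is a finite verification for each individual pair $(k,n)$ --- the polynomial whose partial sums must be inspected depends on $n$ as well as $k$ --- so it can never exhaust the infinitely many cases the conjecture requires, exactly as the method of Theorem~\ref{thm2} stalls at $k=11$. Route (B) is the genuinely new idea that would be needed, and it is precisely the step you do not supply: you correctly diagnose why the naive ``match consecutive $j$'' pairing fails (the $n$-dependent displacement $q^{2(n+1)j}$ shifts the binomial windows out of alignment), but diagnosing the obstruction is not the same as constructing an involution that overcomes it. Likewise, your own observation that the rational factors $\frac{1-q^{2k-1}}{1-q^{2n+1}}$ and $\frac{1-q^{2n}}{1-q^{2n+1}}$ in the recursions have negative coefficients shows that no induction you set up preserves $\succeq 0$, so the recursions, while correct, carry no positivity.

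One further caution on your closing suggestion: by Lemma~\ref{gknsmallk}, the ``fixed small $k$, all $n$'' families you propose as intermediate targets are not independent of the ranges the paper already isolates; for instance $G_{3,n}=G_{n+2,1}=\frac{1-q^{2n+3}}{(1-q)(1-q^3)}$, so your $k=3$ result is the (easy) $n=1$ column in disguise, and handling $k=4,5$ for all $n$ amounts to the columns $n=2,3$ for all $k$. These would still be legitimate partial results, but they sit squarely inside the reduction the authors already describe (it suffices to treat $0\le n\le k-2$), not beyond it. In summary: your reformulations and small cases are sound, but the key idea --- a positivity mechanism uniform in $(k,n)$, whether an involution, an injection, or a manifestly nonnegative expansion as in Conjecture~\ref{fracconj} --- is missing, so the conjecture remains open after your argument just as it does in the paper.
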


The form given in \eqref{qbinquot}, the symmetry of $q$-binomial coefficients about their central peaks, and computational data suggest the following further conjecture: 

\begin{conjecture}\label{gkndiffplusone} For $n\in\N$, $k \geq n+1$, we have $$G_{k,n} - G_{k+1,n-1} \succeq 0.$$  
\end{conjecture}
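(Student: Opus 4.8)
The plan is to prove \Cref{gkndiffplusone} directly from the quotient representation \eqref{qbinquot}, by converting the claimed inequality into a statement about a single $q$-series with manifestly nonnegative coefficients. First I would write out both sides using \eqref{qbinquot}:
\[
G_{k,n}(q) - G_{k+1,n-1}(q) = \frac{1}{1-q^{2n+1}}\frac{\begin{bmatrix}2k+2n-2\\2n\end{bmatrix}_q}{\begin{bmatrix}k+n-1\\n\end{bmatrix}_{q^2}} - \frac{1}{1-q^{2n-1}}\frac{\begin{bmatrix}2k+2n-2\\2n-2\end{bmatrix}_q}{\begin{bmatrix}k+n-1\\n-1\end{bmatrix}_{q^2}}.
\]
The crucial observation is that the two $q$-binomials in the numerators have the \emph{same} upper index $2k+2n-2$, and likewise the two in the denominators share the upper index $k+n-1$; moreover $\begin{bmatrix}k+n-1\\n\end{bmatrix}_{q^2} = \begin{bmatrix}k+n-1\\k-1\end{bmatrix}_{q^2}$ and $\begin{bmatrix}k+n-1\\n-1\end{bmatrix}_{q^2}=\begin{bmatrix}k+n-1\\k\end{bmatrix}_{q^2}$, so both denominators are $q^2$-binomials with a fixed top and bottom indices $k-1$ and $k$ — adjacent columns of the same Gaussian polynomial. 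The plan is therefore to bring the difference over the common denominator $(1-q^{2n+1})(1-q^{2n-1})\begin{bmatrix}k+n-1\\n\end{bmatrix}_{q^2}\begin{bmatrix}k+n-1\\n-1\end{bmatrix}_{q^2}$ and show that the resulting numerator, which I will call $N_{k,n}(q)$, has nonnegative coefficients; since the denominator is a product of factors of the form $1-q^m$ (the Gaussian polynomials factor into $\prod (1-q^{2j})/(1-q^{2i})$, and one checks the quotient of the two adjacent $q^2$-binomials is itself a ratio $(1-q^{2k})/(1-q^{2n})$ up to a monomial), nonnegativity of $N_{k,n}$ together with $1/(1-q^m)\succeq 0$ will give the claim.

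Carrying this out, I would use the standard Pascal-type recurrences $\begin{bmatrix}N\\m\end{bmatrix}_q = \begin{bmatrix}N-1\\m-1\end{bmatrix}_q + q^m\begin{bmatrix}N-1\\m\end{bmatrix}_q = q^{N-m}\begin{bmatrix}N-1\\m-1\end{bmatrix}_q+\begin{bmatrix}N-1\\m\end{bmatrix}_q$ to relate $\begin{bmatrix}2k+2n-2\\2n\end{bmatrix}_q$ and $\begin{bmatrix}2k+2n-2\\2n-2\end{bmatrix}_q$; applying the recurrence twice expresses the former in terms of the latter and of $\begin{bmatrix}2k+2n-3\\2n-1\end{bmatrix}_q$, $\begin{bmatrix}2k+2n-4\\2n-1\end{bmatrix}_q$. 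Similarly I would reduce the ratio of the two $q^2$-binomial denominators to the explicit monomial-times-$(1-q^{2k})/(1-q^{2n})$ form. After these substitutions the difference $G_{k,n}-G_{k+1,n-1}$ should collapse to $q^{\text{(something)}}$ times a quotient whose numerator is a short nonnegative combination of $q$-binomial coefficients divided by $(1-q^{2n-1})(1-q^{2n+1})$ — at which point nonnegativity follows because $q$-binomial coefficients have nonnegative coefficients (they are generating functions for partitions in a box) and $1/((1-q^{2n-1})(1-q^{2n+1}))\succeq 0$.

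The main obstacle I anticipate is that after clearing denominators the numerator $N_{k,n}(q)$ need \emph{not} be a single $q$-binomial or an obviously nonnegative object: the cross terms coming from the two different shift factors $1-q^{2n\pm 1}$ can produce partial cancellation, and the honest content of the conjecture is precisely that these cancellations always resolve in favor of nonnegativity. I expect that proving $N_{k,n}(q)\succeq 0$ will require either (i) a combinatorial injection — interpreting $\begin{bmatrix}2k+2n-2\\2n\end{bmatrix}_q$ as partitions in a $2n\times(2k-2)$ box and exhibiting an injection from the partitions counted with a minus sign into those counted with a plus sign, respecting the weight — or (ii) an appeal to a known unimodality/monotonicity property of adjacent columns of Gaussian polynomials (e.g. that $\begin{bmatrix}N\\m\end{bmatrix}_q - q^{?}\begin{bmatrix}N\\m'\end{bmatrix}_q$ is nonnegative for suitable $m,m'$, which is a refinement of unimodality). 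Option (i) is the more promising route to a fully rigorous argument, and the technical heart will be defining the injection so that it interacts correctly with the extra $\frac{1}{(1-q^{2n-1})(1-q^{2n+1})}$ factors; the symmetry of $q$-binomials about their central peak, flagged in the paper just before the conjecture, is exactly the structural input one would exploit to build such a map.
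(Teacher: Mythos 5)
First, a point of fact: the statement you were asked about is \Cref{gkndiffplusone}, which the paper does \emph{not} prove --- it is stated as an open conjecture (supported by computational data and the symmetry of $q$-binomial coefficients), offered as a refinement that would imply \Cref{ABconj}. So there is no proof in the paper to compare against, and your submission should be judged on whether it closes the problem on its own. It does not: what you have written is a reduction plan, not a proof. After clearing denominators you must show that your numerator $N_{k,n}(q)$ has nonnegative coefficients, and you explicitly concede that this ``is precisely the honest content of the conjecture,'' deferring it to either an unconstructed combinatorial injection or an unspecified ``known unimodality/monotonicity property'' of adjacent columns of Gaussian polynomials. Neither is supplied, and no known result of that type applies off the shelf here, because the extra factors $\frac{1}{1-q^{2n-1}}$ and $\frac{1}{1-q^{2n+1}}$ (odd exponents, interacting with $q^2$-binomials) are exactly what standard unimodality/monotonicity statements about $\begin{bmatrix}N\\m\end{bmatrix}_q$ do not control. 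The genuine gap is therefore the entire positivity step.

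A second, more technical caveat about the reduction itself: you argue that nonnegativity of $N_{k,n}$ suffices ``since the denominator is a product of factors of the form $1-q^m$,'' but your common denominator as written contains the Gaussian polynomials $\begin{bmatrix}k+n-1\\n\end{bmatrix}_{q^2}$ and $\begin{bmatrix}k+n-1\\n-1\end{bmatrix}_{q^2}$ themselves, and the reciprocal of a Gaussian polynomial is in general \emph{not} coefficient-nonnegative (already $1/\begin{bmatrix}2\\1\end{bmatrix}_q=1/(1+q)$ has alternating signs). Your parenthetical fix --- rewriting each $q^2$-binomial in factorial form and using the correct identity $\begin{bmatrix}k+n-1\\n\end{bmatrix}_{q^2}\big/\begin{bmatrix}k+n-1\\n-1\end{bmatrix}_{q^2}=\frac{1-q^{2k}}{1-q^{2n}}$ --- does repair this, but only at the cost of moving polynomial factors like $1-q^{2k}$ and $1-q^{2n}$ upstairs, which reintroduces signed terms into $N_{k,n}$ and returns you to the same unproven positivity claim. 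In short: the algebraic manipulations you outline (Pascal recurrences, adjacent-column ratio, common denominators) are fine and would reproduce identities equivalent to \eqref{qbinquot}, but they do not advance the conjecture; the missing idea is a weight-preserving injection (or some other positivity mechanism) handling the cross terms from $1-q^{2n\pm1}$, and that is precisely what remains open in the paper as well.
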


We can establish the following. 

\begin{lemma}\label{gknsmallk} For $n\in\N$, $1 \leq j \leq n$, $G_{n+2-j,n} = G_{n+2,n-j}$.
\end{lemma}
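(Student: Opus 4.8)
The plan is to prove Lemma~\ref{gknsmallk} directly from the $q$-binomial quotient formula \eqref{qbinquot}, which expresses
\[
G_{k,n}(q) = \frac{1}{1-q^{2n+1}}\, \frac{\begin{bmatrix} 2k+2n-2 \\ 2n \end{bmatrix}_{q}}{\begin{bmatrix} k+n-1 \\ n \end{bmatrix}_{q^2}}.
\]
First I would substitute $k = n+2-j$ into the left-hand side: here $2k+2n-2 = 4n+2-2j$ and $2n$ stays fixed, while on the denominator $k+n-1 = 2n+1-j$ and the lower index is $n$. Then I would substitute $k = n+2$, $n \mapsto n-j$ into the right-hand side: the top $q$-binomial has parameters $2(n+2)+2(n-j)-2 = 4n+2-2j$ over $2(n-j)$, and the $q^2$-binomial has $(n+2)+(n-j)-1 = 2n+1-j$ over $n-j$; the prefactor becomes $1/(1-q^{2(n-j)+1}) = 1/(1-q^{2n+1-2j})$. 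So the identity to prove is
\[
\frac{1}{1-q^{2n+1}}\,\frac{\begin{bmatrix} 4n+2-2j \\ 2n \end{bmatrix}_{q}}{\begin{bmatrix} 2n+1-j \\ n \end{bmatrix}_{q^2}}
= \frac{1}{1-q^{2n+1-2j}}\,\frac{\begin{bmatrix} 4n+2-2j \\ 2(n-j) \end{bmatrix}_{q}}{\begin{bmatrix} 2n+1-j \\ n-j \end{bmatrix}_{q^2}}.
\]

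The key observation is the symmetry of $q$-binomial coefficients: $\begin{bmatrix} N \\ m \end{bmatrix}_q = \begin{bmatrix} N \\ N-m \end{bmatrix}_q$. Applied to the top factors with $N = 4n+2-2j$, we have $N - 2n = 2n+2-2j = 2(n-j)+2$, which is \emph{not} quite $2(n-j)$, so the two top $q$-binomials are not literally equal; similarly $N' - n = n+1-j$ for $N' = 2n+1-j$ on the bottom, versus $n-j$. The clean way forward is to expand each $q$-binomial into its $(q)$-Pochhammer form, $\begin{bmatrix} N \\ m \end{bmatrix}_q = (q;q)_N/\big((q;q)_m (q;q)_{N-m}\big)$, clear all denominators, and reduce the claimed identity to an equality of explicit finite products of the form $(q;q)_a$ and $(q^2;q^2)_b$. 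After cancellation the identity should collapse to a statement relating $(q;q)_{2n+2-2j}$, $(q^2;q^2)_{n+1-j}$, $1-q^{2n+1}$, $(q;q)_{2n+2}$ (or similar) on one side against their counterparts on the other; the elementary relation $(q;q)_{2m} = (q;q^2)_m (q^2;q^2)_m$ together with $(q^2;q^2)_m = (1-q^2)(1-q^4)\cdots(1-q^{2m})$ should make both sides visibly equal.

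The main obstacle — though it is bookkeeping rather than a conceptual gap — is keeping the index arithmetic straight through the two separate substitutions and verifying that the "off-by-a-small-shift" mismatches in the symmetry step are exactly absorbed by the ratio of the two prefactors $\frac{1-q^{2n+1-2j}}{1-q^{2n+1}}$ and by the leftover Pochhammer factors. Concretely, I expect the proof to reduce to checking
\[
\frac{(q;q)_{2n+2-2j}}{(q;q)_{2n+2}} \cdot \frac{(q^2;q^2)_{n+1}}{(q^2;q^2)_{n+1-j}} \cdot (\text{prefactor ratio}) = 1,
\]
after which one uses $(q;q)_{2m}/(q;q)_{2m'} = (q;q^2)_m (q^2;q^2)_m / \big((q;q^2)_{m'}(q^2;q^2)_{m'}\big)$ to split the first factor into its odd and even parts; the even part cancels against the middle factor up to the relevant telescoping window, and the odd part telescopes against the prefactor ratio. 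If, alternatively, one prefers not to pass through \eqref{qbinquot}, the same result can be extracted directly from the defining sum in \eqref{gkn} by noting that $G_{k,n}(q) = \frac{(q^{2k-1};q^2)_n}{(q;q^2)_{n+1}}$ and that with $k = n+2-j$ the numerator is $(q^{2n+3-2j};q^2)_n$, whose factors, listed out, coincide as a set with the factors of $(q^{2n+3};q^2)_{n-j} = (q^{2k'-1};q^2)_{n-j}$ for $k' = n+2$, after matching the denominators $(q;q^2)_{n+1}$ and $(q;q^2)_{n-j+1}$; this is perhaps the shortest route and I would present it as the main argument, using \eqref{qbinquot} only as a remark.
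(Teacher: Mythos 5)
Your alternative argument---substituting $k=n+2-j$ and $k'=n+2$, $n\mapsto n-j$ directly into $G_{k,n}(q)=\frac{(q^{2k-1};q^2)_n}{(q;q^2)_{n+1}}$ from \eqref{gkn} and observing that the $j$ extra numerator factors $(q^{2n+3-2j};q^2)_j$ cancel exactly against the $j$ extra denominator factors in $(q;q^2)_{n+1}/(q;q^2)_{n-j+1}$---is correct and is precisely the paper's (very terse) proof, so presenting it as the main argument, as you propose, is the right call. The longer detour through \eqref{qbinquot} with $q$-binomial symmetry is unnecessary bookkeeping and is only sketched, but since you defer to the direct route nothing is lost.
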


\begin{proof} %Since $k \leq n+1$, we have $2k-1 \leq 2n+1$. 
	%{\bf KB: Following is really hard to read. Equation should be referenced. Stopped here. Koustav can you clean up and add references. \color{blue} K: added a details proof in the details file. \color{black} K: Ending does not make sense here though. \color{blue} K: Agreed, rewrote the sentence according to its proof.}  

Using \eqref{gkn} with $k= n-j+2$ and then using \eqref{gkn} with $k= n+2$ and $n\mapsto n-j$, we conclude the proof.	
 %Hence factors in the numerator and denominator of $\frac{\left(q^{2k-1};q^2\right)_n}{(q;q^2)_{n+1}}$ cancel, from the beginning and the end of $(q^{2k-1};q^2)_n$ and $(q;q^2)_{n+1}$, respectively.  If $j$ factors cancel, we have effectively increased $k$ and decreased $n$ by $j$.
\end{proof}

Since $G_{k,0}(q) = \frac{1}{1-q}$ has nonnegative coefficients for all $k$, it follows from Lemma \ref{gknsmallk} that Conjecture \ref{gkndiffplusone} would imply the nonnegativity of all $G_{k,n}$ and hence \Cref{ABconj}. Lemma \ref{gknsmallk} also has the consequence that if we can show that $G_{k,n} \succeq 0$ for each $k$ and $0 \leq n \leq k-2$, then all $G_{k,n} \succeq 0$ and Conjecture \ref{ABconj} would be proved. Finally, the forms exhibited in the proof of Theorem \ref{thm1}, along with computational evidence, further suggest the following conjecture, which would also be sufficient to establish the positivity of $F_{k,1}(q)$: 

\begin{conjecture}\label{fracconj} We have $G_{k,n}(q)=\sum_{j=0}^n \frac{z_{k,j}(q)}{1-q^{2j+1}}$ where all $z_{k,j}(q)$ have nonnegative coefficients.
\end{conjecture}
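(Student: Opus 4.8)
A possible approach to Conjecture \ref{fracconj} is to argue by induction on $n$, using the recursion
\[
G_{k,n}(q)=\frac{1-q^{2k+2n-3}}{1-q^{2n+1}}\,G_{k,n-1}(q)\qquad(n\ge1),\qquad G_{k,0}(q)=\frac{1}{1-q},
\]
which follows at once from $G_{k,n}(q)=\frac{(q^{2k-1};q^2)_n}{(q;q^2)_{n+1}}$ in \eqref{gkn}. First I would use \Cref{gknsmallk} to reduce to the range $k\ge n+2$: the identity $G_{n+2-j,n}=G_{n+2,n-j}$ rewrites every $G_{k,n}$ with $2\le k\le n+1$ as some $G_{k',n'}$ with $n'<n$ and $k'\ge n'+2$, while $G_{1,n}(q)=\frac{1}{1-q^{2n+1}}$ already has the desired form. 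In the regime $k\ge n+2$ one has $2k+2n-3\ge 2n+3>2n+1$, so in the recursion the exponent in the numerator strictly exceeds the new denominator exponent, which is what makes the scheme below plausible.

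For the inductive step, assume $G_{k,n-1}(q)=\sum_{j=0}^{n-1}\frac{z_{k,j}(q)}{1-q^{2j+1}}$ with all $z_{k,j}\succeq0$. Multiplying by the recursion factor gives
\[
G_{k,n}(q)=\sum_{j=0}^{n-1}\frac{\bigl(1-q^{2k+2n-3}\bigr)z_{k,j}(q)}{(1-q^{2j+1})(1-q^{2n+1})},
\]
and the task is to rewrite the right-hand side as $\sum_{j=0}^{n}\frac{\widetilde z_{k,j}(q)}{1-q^{2j+1}}$ with all $\widetilde z_{k,j}\succeq0$. One cannot split the summands individually with polynomial numerators, since $\gcd\bigl(1-q^{2j+1},1-q^{2n+1}\bigr)=1-q^{\gcd(2j+1,2n+1)}\neq1$ obstructs the naive partial fraction; the redistribution must mix the summands. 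The natural device is a Euclidean-type descent on the numerator exponent: writing $2k+2n-3=(2n+1)m+r$ with $0\le r\le 2n$,
\[
\frac{1-q^{2k+2n-3}}{1-q^{2n+1}}=\sum_{i=0}^{m-1}q^{i(2n+1)}+\frac{q^{m(2n+1)}\bigl(1-q^{r}\bigr)}{1-q^{2n+1}},
\]
where the first term cancels the new denominator cleanly and is manifestly nonnegative, and the residual term carries a strictly smaller exponent $r<2n+1$. Iterating this, and feeding the residual's negative contributions back into the denominators $1-q^{2j+1}$ with $j<n$ by exploiting the nonnegativity and the growth in $k$ of the degrees of the $z_{k,j}$, one hopes to close the induction.

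The main obstacle is precisely the bookkeeping in this last redistribution. The decomposition asserted by the conjecture is highly non-unique, so a successful proof must commit to a \emph{specific} family $z_{k,j}$ and control every cancellation; the explicit $k=5,6,7$ computations in the proof of \Cref{thm1} already show how intricate these cancellations are, and, as the authors note, no uniform pattern has yet been isolated. I expect the cleanest path is to guess and then verify a closed form for $z_{k,j}$: the quotient of Gaussian polynomials in \eqref{qbinquot}, together with their unimodality and symmetry about the central exponent, is the most promising source of the required positivity. An equivalent route would be to strengthen the inductive hypothesis so that it records each $z_{k,j}$ as an explicitly nonnegative polynomial times a Gaussian-type factor, which would let the recursion transport the positivity along. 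Finding the right such form is the heart of the matter.
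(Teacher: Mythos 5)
There is no proof of this statement in the paper to compare against: \Cref{fracconj} is left as an open conjecture, supported only by the explicit decompositions appearing in the proof of \Cref{thm1} for $k\in\{5,6,7\}$ and by computational evidence. Your proposal is likewise not a proof but an announced strategy, and the gap in it is exactly the open content of the conjecture. The ingredients you set up are correct: from \eqref{gkn} one does have $G_{k,n}(q)=\frac{(q^{2k-1};q^2)_n}{(q;q^2)_{n+1}}$, hence the recursion $G_{k,n}=\frac{1-q^{2k+2n-3}}{1-q^{2n+1}}G_{k,n-1}$ with $G_{k,0}=\frac{1}{1-q}$; the reduction to $k\ge n+2$ via \Cref{gknsmallk} together with $G_{1,n}=\frac{1}{1-q^{2n+1}}$ is legitimate; and the Euclidean identity $\frac{1-q^{(2n+1)m+r}}{1-q^{2n+1}}=\sum_{i=0}^{m-1}q^{i(2n+1)}+\frac{q^{m(2n+1)}(1-q^{r})}{1-q^{2n+1}}$ is valid. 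But the decisive step is missing: after this division the residual numerator $1-q^{r}$ still carries a negative term, and your plan requires absorbing all such negative contributions (arising simultaneously for every $j$ in the inductive decomposition of $G_{k,n-1}$) into the lower denominators $1-q^{2j+1}$ with nonnegative numerators. Nothing in the sketch identifies a specific family $z_{k,j}(q)$, nor provides the quantitative lower bounds on their coefficients that such an absorption argument would need; your own wording (``one hopes to close the induction,'' ``finding the right such form is the heart of the matter'') concedes this. As it stands, the argument establishes only the base cases and a correct recursion, not the asserted nonnegative decomposition, so the conjecture remains unproved by the proposal just as it remains unproved in the paper.
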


\appendix

\section{Code}

We include in this section the Maple code for the case $k=10$ that verifies Theorem \ref{thm2}. 

\vspace{-0.3 cm}

\phantom{.}

\verb!> eightpolys:=1, 1+q^2+q^4+q^6+q^8+q^10+q^12+q^14, 1+q^2+2q^4!

\verb! +2q^6+3q^8+3q^10+4q^12+3q^14+3q^16+2q^18+2q^20+q^22+q^24,!

%\verb! !

\verb! 1+q^2+2q^4+3q^6+4q^8+5q^10+6q^12+6q^14+6q^16+6q^18+5q^20+4q^22 !

\verb! +3q^24+2q^26+q^28+q^30, 1+q^2+2q^4+3q^6+5q^8+5q^10+7q^12+7q^14 !
 
 \verb! +8q^16+7q^18+7q^20+5q^22+5q^24+3q^26+2q^28+q^30+q^32, 1+q^2 !
 
% \verb!+ !
 
\verb! +2q^4+3q^6+4q^8+5q^10+6q^12+6q^14+6q^16+6q^18+5q^20+4q^22+3q^24!

\verb! +2q^26+q^28+q^30, 1+q^2+2q^4+2q^6+3q^8+3q^10+4q^12+3q^14+3q^16!
 
%\verb! !

\verb! +2q^18+2q^20+q^22+q^24, 1+q^2+q^4+q^6+q^8+q^10+q^12+q^14, 1!

%\verb! , 1!

\verb!> tenbigpoly := (eightpolys[1]) (q^((0+1)^2))((-1)^0)!

\quad \quad \verb!add(q^(j(2(0) + 1)), j=0..((765765/(2(0)+1))-1))!

\verb! + (eightpolys[1+1]) (q^((1+1)^2))((-1)^1)!

\quad \quad \verb!add(q^(j(2(1) + 1)), j=0..((765765/(2(1)+1))-1))!

\verb! + (eightpolys[2+1]) (q^((2+1)^2))((-1)^2)!

\quad \quad \verb!add(q^(j(2(2) + 1)), j=0..((765765/(2(2)+1))-1))!

\verb! + (eightpolys[3+1]) (q^((3+1)^2))((-1)^3)!

\quad \quad \verb!add(q^(j(2(3) + 1)), j=0..((765765/(2(3)+1))-1))!

\verb! + (eightpolys[4+1]) (q^((4+1)^2))((-1)^4)!

\quad \quad \verb!add(q^(j(2(4) + 1)), j=0..((765765/(2(4)+1))-1))!

\verb! + (eightpolys[5+1]) (q^((5+1)^2))((-1)^5)!

\quad \quad \verb!add(q^(j(2(5) + 1)), j=0..((765765/(2(5)+1))-1))!

\verb! + (eightpolys[6+1]) (q^((6+1)^2))((-1)^6)!

\quad \quad \verb!add(q^(j(2(6) + 1)), j=0..((765765/(2(6)+1))-1))!

\verb! + (eightpolys[7+1]) (q^((7+1)^2))((-1)^7)!

\quad \quad \verb!add(q^(j(2(7) + 1)), j=0..((765765/(2(7)+1))-1))!

\verb! + (eightpolys[8+1]) (q^((8+1)^2))((-1)^8)!

\quad \quad \verb!add(q^(j(2(8) + 1)), j=0..((765765/(2(8)+1))-1)):!

\verb!> currsum :=0;!

\quad \verb!errorfound:=False;!

\quad \verb!for s from 1 to 765830 do!

\quad \verb!currsum := currsum + coeff(tenbigpoly, q^s):!

\quad \verb!if currsum < 0 then!

\quad \verb!print("Error found at", s);!

\quad \verb!errorfound := True;!

\quad \verb!end if;!

\quad \verb!end do:!

\quad \verb!if errorfound = False then!

\quad \verb!print("No error found")!

\quad \verb!end if!

\phantom{.}

\end{document}